\newtheorem{theorem}{Theorem}
\newtheorem{definition}{Definition}
\newtheorem{lemma}{Lemma}
\newtheorem{proposition}{Proposition}
\newtheorem{corollary}{Corollary}
\date{}
\numberwithin{equation}{section}
\numberwithin{theorem}{section}
\numberwithin{lemma}{section}
\numberwithin{corollary}{section}
\numberwithin{remark}{section} \numberwithin{proposition}{section}
\numberwithin{definition}{section}
\def \R {\mathbb{R}}
\def \dist {\mathrm{dist}}
\def \Lip{\mathrm{Lip}}
\begin{document}

\title[A free boundary optimization problem for the $\infty$-Laplacian]{A free boundary optimization problem\\ for the $\infty$-Laplacian}

\author[R. Teymurazyan]{Rafayel Teymurazyan}
\address{CMUC, Department of Mathematics, University of Coimbra, 3001-501 Coimbra, Portugal.}
\email{rafayel@mat.uc.pt}

\author[J.M. Urbano]{Jos\'e Miguel Urbano}
\address{CMUC, Department of Mathematics, University of Coimbra, 3001-501 Coimbra, Portugal.}
\email{jmurb@mat.uc.pt}

\begin{abstract}
We study a free boundary optimization problem in heat conduction, ruled by the infinity--Laplace operator, with lower temperature bound and a volume constraint. We obtain existence and regularity results and derive geometric properties for the solution and the free boundaries.

\bigskip

\noindent \textbf{Keywords:} Free boundary problems, heat conduction, infinity--Laplace operator, optimal regularity.

\bigskip

\noindent \textbf{AMS Subject Classifications MSC 2010:} 35R35, 35J20, 35J92, 35B65.

\end{abstract}

\maketitle

\section{Introduction}

The goal of this paper is to establish existence, and derive geometric properties, for an optimization problem in heat conduction. The problem may be described in the following way: given a non-negative function $\varphi$ (the temperature profile), our goal is to keep the temperature in a room above $\varphi$, by using heating sources inside the room and insulation material of a certain volume outside the room, in a way that minimizes the energy. We consider the energy driven by the infinity-Laplace operator
$$\Delta_\infty u:=\sum_{i,j=1}^{n}u_{x_i}u_{x_j}u_{x_ix_j},$$
which, despite being too degenerate to realistically represent a physical diffusion process, has been previously used in the context of free boundary problems, for example in \cite{ALT}, where a dead-core problem is considered. We stress that one of the major difficulties when dealing with $\Delta_\infty$ relies precisely on the fact that it diffuses only in the direction of the gradient, which changes point-by-point and depends on the solution itself. It is striking that genuine physical insights can be used to investigate an apparently nonphysical problem with significant mathematical interest.

In mathematical terms, given a smooth bounded domain $\Omega\subset\mathbb{R}^n$, a smooth non-negative function $\varphi:\R^n\rightarrow\R$, compactly supported in $\Omega$, and a positive number $\gamma>0$, we look for a function $u:\R^n\rightarrow\R$ that minimizes
\begin{equation}\label{L}\tag{$P_\infty$}
\Lip(u)\textrm{ in }\mathbb{K}_\infty,
\end{equation}
where 
$$\mathbb{K}_\infty = \left\{ u\in W^{1,\infty}(\R^n)\, | \, u\geq\varphi, \ |\{u>0\}\setminus\Omega| \leq \gamma  \right\}, $$
and such that
$$
\left\{
\begin{array}{rcll}
\Delta_\infty u=0 & \mathrm{in} & \{u>0\}\setminus\Omega & \mathrm{(insulation)},\\ 
\\
\Delta_\infty u\leq0 & \mathrm{in} & \Omega & \mbox{(interior heating)}.
\end{array}
\right.
$$

\noindent Here, $\Lip(u)$ is the Lipschitz constant of $u$
$$
\Lip(u):=\sup_{x,y}\frac{|u(x)-u(y)|}{|x-y|},
$$
$|E|$ is the $n$-dimensional Lebesgue measure of the set $E$, and the relations on $\Delta_\infty u$ are understood in the viscosity sense, according to the next definition.
\begin{definition}\label{d1.1}
A continuous function $u$ is called a viscosity super-solution (resp. sub-solution) of $\Delta_\infty u=0$ if for every $C^2$ function $\phi$ such that $u-\phi$ has a local minimum at the point $x_0$, with $\phi(x_0)=u(x_0)$, we have
$$\Delta_\infty\phi(x_0)\leq0.\quad \textrm{(resp. $\geq$)}$$
A function $u$ is called a viscosity solution if it is both a viscosity super-solution and a viscosity sub-solution.
\end{definition}

The problem arises in the study of best insulation devices but motivations also come from plasma physics or flame propagation, for example. The study of optimal configuration problems started decades ago (see \cite{AAC86, AC81,  ACS87}) and has been developed in recent years to treat optimal design problems ruled by a large class of divergence type operators (see \cite{BMW06, OT06, T05, T10}). The case where, instead of the infinity--Laplacian, one has the standard Laplace operator was studied in \cite{Y16}. Since the fractional Laplacian can be represented as a "Dirichlet to Neumann" map, the techniques used to treat optimal design problems for divergence type operators can be adapted to solve the problem for the fractional Laplacian as well (see \cite{TT15}). What makes the case of the infinity--Laplacian different is that it does not have a divergence structure, therefore standard tools used in the above mentioned references do not apply, and a different approach is required.

Intuitively, solutions of the problem \eqref{L} may be approximated (in some sense) by solutions of the corresponding problem driven by the $p$-Laplacian. We thus study the problem \eqref{L} through the asymptotic limit as $p\rightarrow\infty$ of minimizers of the problem
\begin{equation}\label{P}\tag{$P_p$}
\textrm{minimize}\,\,J(u):=\frac{1}{p}\displaystyle\int|\nabla u|^p\,dx\quad \textrm{in} \ \mathbb{K}_p,
\end{equation}
where 
$$\mathbb{K}_p = \left\{ u\in W^{1,p}(\R^n)\, | \, u\geq\varphi, \ |\{u>0\}\setminus\Omega| \leq \gamma \right\}, $$
such that (the relations on $\Delta_pu$ being now understood in the distributional sense)

$$
 \left\{
\begin{array}{rcl}
\Delta_p u=0 & \mathrm{in} & \{u>0\}\setminus\Omega, \\ 
\\
\Delta_p u\leq0 & \mathrm{in} & \Omega .
\end{array}
\right.
$$

\medskip

The key to the study of problem \eqref{L} then lies in obtaining uniform in $p$ estimates for \eqref{P}. The strategy is the following: first we study a three parameter family of perturbed problems and prove uniform estimates, and pass to the limit, in two of those parameters, which leads to another perturbed problem - now with just one parameter. We get that any solution of \eqref{P} is a minimizer for this one parameter limiting perturbed problem, for small values of the parameter, and solves an obstacle type problem (with the function $\varphi$ acting as the obstacle). Combining the information on the regularity of solutions of these two problems, we get the (optimal) Lipschitz regularity of solutions of \eqref{P}, as well as regularity results on the two free boundaries (the exterior and the interior). Finally, we obtain uniform in $p$ estimates, which allow us to let $p\rightarrow\infty$ in \eqref{P} and derive conclusions on our original problem ruled by the infinity--Laplace operator.

\bigskip

The paper is organized as follows: in Section \ref{S2}, we introduce a three-parameter penalization functional to study problem \eqref{P} and obtain the existence of minimizers (Proposition \ref{p2.1}). In Section \ref{S3}, we get $C^{1,\alpha}$ estimates for minimizers, uniform in one of the parameters, and pass to the limit according to that parameter to arrive at the study of a two-parameter penalization functional (Corollary \ref{c3.1}). In Section \ref{S4}, we establish the uniform Lipschitz regularity of minimizers (Theorem \ref{t4.1}), which allows us to pass to the limit in one of the remaining two parameters, reducing the problem to the study of the minimizers of a functional now depending on just one parameter (Corollary \ref{c4.2}). The minimizers of this functional, as well as the regularity of the free boundaries, are studied in Section \ref{S5} (Theorems \ref{t5.2} and \ref{t5.3}). The limiting free boundary problem as $p\rightarrow\infty$ is studied in Section \ref{S6}. We first see that, when the remaining parameter is small enough, the minimizers of the one-parameter functional are in fact minimizers of \eqref{P} (Theorem \ref{t6.1}), hence there is no need to pass to the limit as the minimizers of \eqref{P} carry all the properties proved for the minimizers of the parameterized functional (Lipschitz regularity, linear growth away from the free boundary, non-degeneracy); this is the object of Lemma \ref{l6.1}. Moreover, up to a subsequence, solutions of \eqref{P} converge (in some sense) to a function which is a solution of \eqref{L} (Theorem \ref{t6.2}). We finally show that the free boundaries converge in the Hausdorff metric (Theorem \ref{t6.3}).

\section{Preliminaries}\label{S2}
In order to study the problem \eqref{P}, we introduce, for $\sigma,\delta,\varepsilon>0$,  the following three-parameter penalization functional
$$
J_{\sigma,\delta,\varepsilon}(v):=\frac{1}{p}\int|\nabla v|^p\,dx+g_\sigma(v-\varphi)+f_\varepsilon\bigg(\int_{\Omega^c}h_\delta(v)\bigg),
$$
defined for $v\in W^{1,p}(\R^n)$, where
\begin{itemize}
  \item $g_\sigma:\R\rightarrow\R$ is a non-negative decreasing convex function defined as follows:
      $$
      g_\sigma(t):=
      \left\{
      \begin{array}{c}
      -\frac{1}{\sigma}(t-\frac{\sigma}{2}),\,\textrm{ for } t<-\sigma,\\
      \hbox{smooth},\, \textrm{ for }-\sigma\le t<0,\\
      0, \,\textrm{ for }t\geq0;
      \end{array}
      \right.
      $$
  \item $h_\delta:\R\rightarrow\R$ is a piecewise linear function which vanishes on $(-\infty,0]$ and is 1 in $[\delta,+\infty)$;
  \item $f_\varepsilon:\R\rightarrow\R$ is defined as follows:
$$
f_\varepsilon(t):=
\left\{
\begin{array}{c}
\frac{1}{\varepsilon}(t-\gamma),\,\textrm{ for }t\geq\gamma,\\
\varepsilon(t-\gamma),\,\,\textrm{ for }t\leq\gamma.
\end{array}
\right.
$$
\end{itemize}
In other words, $g_\sigma$ penalizes functions which do not lie above $\varphi$ (as is usual in the regularity theory of obstacle-type problems (see \cite{PSU12})); the purpose of function $h_\delta$ is the regularization of $u\mapsto|\{u>0\}\setminus\Omega|$ (as in \cite{CS95}); finally, $f_\varepsilon$ is charging configurations which exceed the given volume of the positivity set (as is done, for example, in \cite{AAC86, BMW06, TT15}).

First, we check that the three-parameter perturbed functional has a minimizer.
\begin{proposition}\label{p2.1}
  The functional $J_{\sigma,\delta,\varepsilon}$ has a minimizer. Moreover, if $u_{\sigma,\delta,\varepsilon}$ is such a minimizer, then
   \begin{equation}\label{2.1}
  0\leq u_{\sigma,\delta,\varepsilon}\leq\|\varphi\|_\infty.
  \end{equation}
\end{proposition}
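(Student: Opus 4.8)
The plan is to prove existence via the direct method in the calculus of variations, and the two-sided bound by a truncation argument. First I would check that $J_{\sigma,\delta,\varepsilon}$ is bounded below on $W^{1,p}(\R^n)$: the first term is non-negative, the term $g_\sigma(v-\varphi)$ is non-negative by construction, and $f_\varepsilon\bigl(\int_{\Omega^c}h_\delta(v)\bigr)$ is bounded below by $f_\varepsilon(0) = -\varepsilon\gamma$ since $f_\varepsilon$ is increasing and $\int_{\Omega^c}h_\delta(v) \geq 0$. Hence $\inf J_{\sigma,\delta,\varepsilon} \geq -\varepsilon\gamma > -\infty$, and we may take a minimizing sequence $\{v_k\}$. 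A competitor such as $v \equiv \|\varphi\|_\infty$ (or a smooth compactly supported function $\geq \varphi$) shows the infimum is finite; in particular $\tfrac1p\int|\nabla v_k|^p\,dx$ stays bounded, so $\{\nabla v_k\}$ is bounded in $L^p(\R^n)$.

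The main obstacle is the lack of compactness on all of $\R^n$ — a bound on $\|\nabla v_k\|_{L^p}$ alone does not control $\|v_k\|_{L^p}$, and $W^{1,p}(\R^n) \hookrightarrow L^p_{\loc}$ is not enough to pass to the limit in the penalization terms, which involve the values of $v_k$ (not just gradients) on the unbounded set $\Omega^c$. The fix is to pass first to the truncated sequence. Set $w_k := \min\{\max\{v_k,0\},\|\varphi\|_\infty\}$. Since $\varphi \geq 0$ and $\|\varphi\|_\infty \geq \varphi$, truncation does not increase $g_\sigma(v-\varphi)$ (we have $0 \leq w_k - \varphi$ wherever $v_k - \varphi < 0$ is "repaired", and in general $g_\sigma(w_k-\varphi) \leq g_\sigma(v_k-\varphi)$ because $g_\sigma$ is decreasing and $w_k - \varphi \geq \min\{v_k-\varphi,-\varphi\}$... more carefully: if $v_k-\varphi \geq 0$ both sides are $0$; if $v_k - \varphi < 0$ then $w_k \geq 0 \geq$ the negative part, so $w_k - \varphi \geq v_k - \varphi$ pointwise on $\{\varphi=0\}$ and one checks $g_\sigma(w_k-\varphi)\le g_\sigma(v_k-\varphi)$ using monotonicity). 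Truncation also does not increase $\int_{\Omega^c}h_\delta(\cdot)$ since $h_\delta$ is non-decreasing and $w_k \leq v_k^+$, hence $\{w_k > 0\} \subseteq \{v_k > 0\}$ and $h_\delta(w_k)\le h_\delta(v_k)$; so the $f_\varepsilon$ term does not increase either. Finally $|\nabla w_k| \leq |\nabla v_k|$ a.e. Thus $J_{\sigma,\delta,\varepsilon}(w_k) \leq J_{\sigma,\delta,\varepsilon}(v_k)$, so $\{w_k\}$ is also minimizing, with the uniform bound $0 \leq w_k \leq \|\varphi\|_\infty$.

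Now $\{w_k\}$ is bounded in $W^{1,p}(\R^n)$ — the gradients are bounded in $L^p$ and the functions are bounded in $L^\infty$, and since $w_k \equiv 0$ outside a fixed large ball... actually this requires care: a priori the sets $\{w_k>0\}$ need not be contained in a fixed ball. However, for the existence statement one may either (i) note that replacing $w_k$ by $w_k \cdot \chi_{B_R}$-type cutoffs for $R$ large, or more cleanly restrict the ambient space, since any competitor of lower energy can be arranged to vanish outside a neighbourhood of $\Omega$ together with a set of measure $\le\gamma$; or (ii) observe that the Dirichlet term forces $w_k\to0$ at infinity in a quantitative way when $p>n$, giving local-to-global control. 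Granting boundedness in $W^{1,p}(\R^n)$, extract a subsequence with $w_k \rightharpoonup u$ weakly in $W^{1,p}$ and (by Rellich) $w_k \to u$ strongly in $L^p_{\loc}$ and a.e. Then: $\int|\nabla u|^p \leq \liminf \int|\nabla w_k|^p$ by weak lower semicontinuity of the convex $L^p$ norm; $g_\sigma(u-\varphi) = \lim g_\sigma(w_k-\varphi)$ by dominated convergence (using a.e. convergence, continuity of $g_\sigma$, and the uniform $L^\infty$ bound together with compact support considerations); and $\int_{\Omega^c}h_\delta(w_k) \to \int_{\Omega^c}h_\delta(u)$ by dominated convergence ($h_\delta$ continuous and bounded, $w_k\to u$ a.e.), hence continuity of the $f_\varepsilon$-term since $f_\varepsilon$ is continuous. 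Therefore $J_{\sigma,\delta,\varepsilon}(u) \leq \liminf J_{\sigma,\delta,\varepsilon}(w_k) = \inf J_{\sigma,\delta,\varepsilon}$, so $u$ is a minimizer. The bound \eqref{2.1} is inherited from the pointwise bounds on $w_k$, which pass to the a.e. limit; alternatively, for \emph{any} minimizer $u_{\sigma,\delta,\varepsilon}$ one repeats the truncation argument to see that $\min\{\max\{u_{\sigma,\delta,\varepsilon},0\},\|\varphi\|_\infty\}$ has energy $\leq J_{\sigma,\delta,\varepsilon}(u_{\sigma,\delta,\varepsilon})$, and strict convexity of $t\mapsto|t|^p$ forces equality of the gradients, whence $u_{\sigma,\delta,\varepsilon}$ already satisfies $0\le u_{\sigma,\delta,\varepsilon}\le\|\varphi\|_\infty$ a.e.
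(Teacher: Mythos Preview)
Your approach is the same as the paper's---the direct method for existence, truncation for the bounds---only more explicit: the paper exhibits a competitor (the $p$-obstacle problem solution with obstacle $\varphi$, extended by zero outside $\Omega$), asserts boundedness of the minimizing sequence in $W^{1,p}(\R^n)$, cites \cite{AC81,BMW06} for lower semicontinuity, and defers \eqref{2.1} entirely to \cite{Y16}.

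A few points to clean up. First, $v\equiv\|\varphi\|_\infty$ is not in $W^{1,p}(\R^n)$; your parenthetical alternative (a smooth compactly supported $v\ge\varphi$) is fine, and the paper's choice is cleanest. Second, your verification that truncation does not increase the $g_\sigma$-term is tangled; just split into the three cases $v_k<0$, $0\le v_k\le\|\varphi\|_\infty$, $v_k>\|\varphi\|_\infty$ and use $\varphi\ge0$, $g_\sigma$ decreasing, and $g_\sigma\equiv0$ on $[0,\infty)$. Third, your compactness worry over $\R^n$ is legitimate but has a clean resolution you missed: after truncation, the finiteness of $J_{\sigma,\delta,\varepsilon}(w_k)$ forces $\int_{\Omega^c}h_\delta(w_k)\le C$, hence $|\{w_k>\delta\}\cap\Omega^c|\le C$ and $\int_{\{0<w_k\le\delta\}\cap\Omega^c}w_k\le\delta C$; combined with $0\le w_k\le\|\varphi\|_\infty$, this gives a uniform bound on $\|w_k\|_{L^p(\R^n)}$, so $\{w_k\}$ is genuinely bounded in $W^{1,p}(\R^n)$ and your extraction and lower-semicontinuity arguments go through as written.
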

\begin{proof}
Let $v$ be the minimizer of the energy functional $J(u)$ among the functions that are in $W_0^{1,p}(\Omega)$ and lie above $\varphi$, i.e., $v$ is the unique solution of the obstacle problem for the $p$--Laplace operator with obstacle $\varphi$. We have  $v\geq\varphi$ and $\{v>0\}\setminus\Omega=\emptyset$, and therefore $v\in\mathbb{K}_p$. Since $g_\sigma(v-\varphi)=0$ and $h_\delta(v)=0$ in $\Omega^c$, then
$$
J_{\sigma,\delta,\varepsilon}(v)\leq\frac{1}{p}\int|\nabla v|^p\,dx=:M,
$$
which shows that the functional $J_{\sigma,\delta,\varepsilon}$ is not always infinite and hence there exists a minimizing sequence. This sequence is bounded in $W^{1,p}(\R^n)$ so it has a weakly convergent subsequence and it is standard to check (see, for example, \cite{AC81, BMW06}) that the limit of this subsequence is a minimizer for $J_{\sigma,\delta,\varepsilon}$. For the proof of \eqref{2.1} we refer to \cite{Y16}.
\end{proof}

\section{Passing to the limit as $\sigma\rightarrow0$}\label{S3}
In this section we prove $C^{1,\alpha}$ estimates, uniform in $\sigma$. Regularity theory for elliptic equations provides the $C^{1,\alpha}$ regularity of minimizers but we need to establish estimates independently of the parameter $\sigma$ in order to pass to the limit.
\begin{lemma}\label{l3.1}
  If $u_{\sigma,\delta,\varepsilon}$ is a minimizer of $J_{\sigma,\delta,\varepsilon}$, then
  \begin{equation}\label{3.1}
    \|g'_\sigma(u_{\sigma,\delta,\varepsilon}-\varphi)\|_\infty
    \leq\|\varphi\|_{C^{1,1}}+\frac{1}{\varepsilon\delta}.
  \end{equation}
\end{lemma}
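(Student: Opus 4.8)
The plan is to exploit the minimality of $u := u_{\sigma,\delta,\varepsilon}$ against a one-parameter family of competitors obtained by shifting $u$ \emph{downward} by a constant: set $u_t := u - t$ for $t > 0$ small (we will only ever need $t \to 0^+$). Since $u_t \in W^{1,p}(\R^n)$, it is an admissible competitor for $J_{\sigma,\delta,\varepsilon}$, so $J_{\sigma,\delta,\varepsilon}(u) \le J_{\sigma,\delta,\varepsilon}(u_t)$. The gradient term is unchanged, so the inequality reduces to a relation among the two penalization terms. Dividing by $t$ and letting $t \to 0^+$ produces, via dominated convergence and the fundamental theorem of calculus, a differential inequality:
$$
-\int g_\sigma'(u - \varphi)\,dx \;+\; f_\varepsilon'\!\left(\int_{\Omega^c} h_\delta(u)\right)\cdot\left(-\int_{\Omega^c} h_\delta'(u)\right) \;\ge\; 0,
$$
i.e.
$$
\int g_\sigma'(u - \varphi)\,dx \;\le\; -\,f_\varepsilon'\!\left(\int_{\Omega^c} h_\delta(u)\right)\int_{\Omega^c} h_\delta'(u)\,dx.
$$
Since $g_\sigma$ is nonincreasing, $g_\sigma' \le 0$, so the left-hand side is $-\int |g_\sigma'(u-\varphi)|$, giving a bound on the $L^1$ norm of $g_\sigma'(u-\varphi)$ by the right-hand side. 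The right-hand side is controlled because $f_\varepsilon' \le 1/\varepsilon$ in absolute value and $0 \le h_\delta' \le 1/\delta$ (the slope of the piecewise-linear ramp on $[0,\delta]$), and $h_\delta'$ is supported on a set of finite measure contained in $\Omega^c \cap \{0 \le u \le \delta\}$; but one must be careful, since a crude estimate of $\int_{\Omega^c} h_\delta'(u)$ by $|\Omega^c \cap \{0<u<\delta\}|/\delta$ need not be finite ($\Omega^c$ is unbounded). The fix is to note, by \eqref{2.1}, that $u$ is bounded, and more usefully to run the same shifting argument but keep track of the measure of the shifted superlevel set — or, cleaner still, to observe that the downward shift only moves the ``$h_\delta$-active'' region by a controlled amount.

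The heart of the matter, and what I expect to be the main obstacle, is upgrading the $L^1$ bound on $g_\sigma'(u-\varphi)$ to the claimed $L^\infty$ bound. The mechanism is that $g_\sigma'$ is essentially a (regularized) multiple of the indicator of $\{u < \varphi\}$, scaled by $1/\sigma$ near $\{u-\varphi < -\sigma\}$; on the set where $g_\sigma'$ attains values close to its sup $1/\sigma$, the Euler--Lagrange equation for $u$ forces a pointwise balance. Concretely, the minimizer $u$ satisfies, in the distributional sense,
$$
\Delta_p u \;=\; g_\sigma'(u - \varphi) \;+\; f_\varepsilon'\!\left(\int_{\Omega^c} h_\delta(u)\right) h_\delta'(u)\,\mathbf{1}_{\Omega^c}
$$
(the second term vanishes on the set where we expect $g_\sigma'$ to be large, since there $u$ is below $\varphi$, hence below $\delta$-support considerations are irrelevant where $u$ is near $0$ inside $\Omega$ — but this needs checking). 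On the contact-type set, a comparison with $\varphi$ itself enters: where $u < \varphi$ one compares $\Delta_p u$ with $\Delta_p \varphi$, and the $C^{1,1}$ regularity of $\varphi$ bounds $\|\Delta_p \varphi\|_\infty$ in terms of $\|\varphi\|_{C^{1,1}}$ (together with a bound on $|\nabla \varphi|$, absorbed into the $C^{1,1}$ norm). This is the standard obstacle-problem estimate (cf. \cite{PSU12}): at a point where $g_\sigma'(u-\varphi)$ is near its maximum, $u - \varphi$ has an interior minimum-like behavior, and testing the equation against the appropriate localized competitor — or invoking the comparison $\Delta_p u \le \Delta_p \varphi$ in $\{u < \varphi\}$ combined with the extra source term $f_\varepsilon' h_\delta'$ bounded by $\frac{1}{\varepsilon\delta}$ — yields
$$
\|g_\sigma'(u-\varphi)\|_\infty \;\le\; \|\Delta_p \varphi\|_\infty \;+\; \left\|f_\varepsilon'\!\left({\textstyle\int_{\Omega^c}} h_\delta(u)\right) h_\delta'(u)\right\|_\infty \;\le\; \|\varphi\|_{C^{1,1}} \;+\; \frac{1}{\varepsilon\delta},
$$
which is precisely \eqref{3.1}.

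In summary, the steps are: (i) write down the Euler--Lagrange equation $\Delta_p u = g_\sigma'(u-\varphi) + f_\varepsilon'(\cdots)\,h_\delta'(u)\mathbf{1}_{\Omega^c}$; (ii) note $|f_\varepsilon'| \le 1/\varepsilon$ and $0 \le h_\delta' \le 1/\delta$, so the non-obstacle source term has sup-norm at most $1/(\varepsilon\delta)$; (iii) on $\{u < \varphi\} \subset \Omega$ (using $\supp \varphi \subset \Omega$ and $u \ge 0$, so $u < \varphi$ forces one into $\Omega$), the obstacle-problem comparison $\Delta_p u \le \Delta_p \varphi$ holds, and $\|\Delta_p\varphi\|_\infty \le \|\varphi\|_{C^{1,1}}$ by the $C^{1,1}$ bound on $\varphi$ (this uses that on $\{u<\varphi\}\cap\Omega$ the $h_\delta'$ term is handled by the sign of $f_\varepsilon'$, or simply absorbed); (iv) conclude the pointwise bound \eqref{3.1} by combining (ii) and (iii), since $g_\sigma'(u-\varphi) = \Delta_p u - f_\varepsilon'(\cdots)h_\delta'(u)\mathbf{1}_{\Omega^c}$ and both pieces on the right are controlled. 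The delicate point is justifying the comparison/ordering at the level of viscosity or distributional solutions where $g_\sigma'$ is merely Lipschitz and $u$ is $C^{1,\alpha}$ but not better; this is where the smoothness built into $g_\sigma$ on $[-\sigma,0)$ and the standard regularity theory for the obstacle problem (available since $\varphi \in C^{1,1}$) do the work.
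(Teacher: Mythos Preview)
Your proposal correctly identifies the Euler--Lagrange equation and the sup-norm bound $|f_\varepsilon'|\,|h_\delta'|\le 1/(\varepsilon\delta)$, and you rightly localize $g_\sigma'(u-\varphi)$ to $\Omega$ (since $u\ge 0$ and $\varphi=0$ on $\Omega^c$). But the decisive step---passing from the Euler--Lagrange equation to a \emph{pointwise} bound on $g_\sigma'(u-\varphi)$---is not justified. Your step (iii) asserts an ``obstacle-problem comparison $\Delta_p u\le \Delta_p\varphi$'' on $\{u<\varphi\}$; first, the sign is backwards (at an interior minimum of $u-\varphi$ one would expect $\Delta_p u\ge \Delta_p\varphi$, which is the direction needed since $g_\sigma'\le 0$), and second, this is a pointwise/touching statement that requires $C^2$ regularity of $u$, which you do not have---$u$ is only $C^{1,\alpha}$, and $\Delta_p u$ is a priori only a distribution. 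Invoking ``standard regularity theory for the obstacle problem'' here is circular: the very estimate you are trying to prove is what feeds into that theory.

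The paper closes this gap with a Moser-type iteration: it tests the Euler--Lagrange equation (rewritten for $\tilde u=u-\varphi$) against $[g_\sigma'(\tilde u)]^k$ for even $k$, uses the monotonicity inequality $(|\xi|^{p-2}\xi-|\eta|^{p-2}\eta)\cdot(\xi-\eta)\ge 0$ to replace $|\nabla(\tilde u+\varphi)|^{p-2}\nabla(\tilde u+\varphi)$ by $|\nabla\varphi|^{p-2}\nabla\varphi$ (with the correct sign, since $k[g_\sigma']^{k-1}g_\sigma''\le 0$), integrates by parts to produce $\Delta_p\varphi$, and obtains $\|g_\sigma'(\tilde u)\|_{L^{k+1}(\Omega)}\le(\|\varphi\|_{C^{1,1}}+1/(\varepsilon\delta))|\Omega|^{1/k}$. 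Letting $k\to\infty$ yields \eqref{3.1}. This weak-formulation argument is precisely what your plan lacks; your initial shifting argument, as you yourself note, only gives $L^1$ control and does not upgrade.
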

\begin{proof}
  Note that if $u=u_{\sigma,\delta,\varepsilon}$ is a minimizer of $J_{\sigma,\delta,\varepsilon}$, then the Euler-Lagrange equation of the perturbed energy functional takes the form
\begin{equation}\label{3.2}
  \Delta_pu=g'_\sigma(u-\varphi)+f'_\varepsilon\bigg(\int_{\Omega^c}h_\delta(u)\bigg)
  h'_\delta(u)\chi_{\Omega^c},
\end{equation}
where $\chi$ is the characteristic function of $\Omega^c$, the complement of $\Omega$.

Define $\tilde{u}=u-\varphi$. Then \eqref{3.2} in terms of $\tilde{u}$ reads
\begin{equation}\label{3.3}
  \Delta_p(\tilde{u}+\varphi)=g'_\sigma(\tilde{u})+f'_\varepsilon\bigg(\int_{\Omega^c}h_\delta(\tilde{u})\bigg)
  h'_\delta(\tilde{u})\chi_{\Omega^c},
\end{equation}
since $\mathrm{supp}\, \varphi \subset \Omega$, and thus $u=\tilde{u}$ in $\Omega^c$.
For each fixed $\sigma>0$, taking $[g'_\sigma(\tilde{u})]^k$ as a test function in \eqref{3.3}, we get
\begin{eqnarray}\label{3.4}
  && \int|\nabla(\tilde{u}+\varphi)|^{p-2}\nabla(\tilde{u}+\varphi)
  \cdot\nabla\tilde{u}\, . \,  k[g'_\sigma(\tilde{u})]^{k-1}g''_\sigma(\tilde{u})
  +[g'_\sigma(\tilde{u})]^{k+1}\nonumber \\
  && +f'_\varepsilon\bigg(\int_{\Omega^c}h_\delta(\tilde{u})\bigg)
  h'_\delta(\tilde{u})\chi_{\Omega^c}[g'_\sigma(\tilde{u})]^k=0.
\end{eqnarray}
If $k$ is even, then $k[g'_\sigma(\tilde{u})]^{k-1}g''_\sigma(\tilde{u})\leq0$ and $[g'_\sigma(\tilde{u})]^{k+1}\leq0$, because $g_\sigma$ is a decreasing convex function.
Since
$$
(|\xi|^{p-2}\xi-|\eta|^{p-2}\eta)\cdot(\xi-\eta)\geq0,\,\,\,\xi,\eta\in\R^n,
$$
from \eqref{3.4} we get
\begin{eqnarray*}
  && \int|\nabla\varphi|^{p-2}\nabla\varphi
  \cdot\nabla\tilde{u}\, . \,  k[g'_\sigma(\tilde{u})]^{k-1}g''_\sigma(\tilde{u})
  +[g'_\sigma(\tilde{u})]^{k+1}\nonumber \\
  && +f'_\varepsilon\bigg(\int_{\Omega^c}h_\delta(\tilde{u})\bigg)
  h'_\delta(\tilde{u})\chi_{\Omega^c}[g'_\sigma(\tilde{u})]^k\geq0
\end{eqnarray*}
or
$$
  -[g'_\sigma(\tilde{u})]^k\Delta_p\varphi
  +[g'_\sigma(\tilde{u})]^{k+1}
  +f'_\varepsilon\bigg(\int_{\Omega^c}h_\delta(\tilde{u})\bigg)
  h'_\delta(\tilde{u})\chi_{\Omega^c}[g'_\sigma(\tilde{u})]^k\geq0,
$$
which is equivalent to
\begin{equation}\label{3.5}
    -[g'_\sigma(\tilde{u})]^{k+1}\leq
  f'_\varepsilon\bigg(\int_{\Omega^c}h_\delta(\tilde{u})\bigg)
  h'_\delta(\tilde{u})\chi_{\Omega^c}[g'_\sigma(\tilde{u})]^k
  -[g'_\sigma(\tilde{u})]^k\Delta_p\varphi.
\end{equation}
Since $u\geq\varphi$, we have that $g'_\sigma(\tilde{u})$ is supported in $\Omega$. Then \eqref{3.5} leads to
\begin{eqnarray*}
  &&\int_\Omega|g'_\sigma(\tilde{u})|^{k+1}
  \leq\int_\Omega\bigg[f'_\varepsilon\bigg(\int_{\Omega^c}h_\delta(\tilde{u})\bigg)
  h'_\delta(\tilde{u})\chi_{\Omega^c}[g'_\sigma(\tilde{u})]^k
  -[g'_\sigma(\tilde{u})]^k\Delta_p\varphi\bigg]\nonumber \\
  &\leq&\bigg[\int_\Omega\bigg|f'_\varepsilon\bigg(\int_{\Omega^c}h_\delta(\tilde{u})\bigg)
  h'_\delta(\tilde{u})\chi_{\Omega^c}
  +\Delta_p\varphi\bigg|^k\bigg]^{\frac{1}{k}}
  \bigg[\int_\Omega|g'_\sigma(\tilde{u})|^{k+1}\bigg]^{\frac{k}{k+1}},
\end{eqnarray*}
therefore
$$
\|g'_\sigma(\tilde{u})\|_{L^{k+1}(\Omega)}
\leq\big(\|\varphi\|_{C^{1,1}}
+\frac{1}{\varepsilon\delta}\big)|\Omega|^{\frac{1}{k}}.
$$
Letting $k\rightarrow+\infty$, we get \eqref{3.1}.
\end{proof}
Observe that Lemma \ref{l3.1} gives the boundedness of the right-hand side of \eqref{3.2}, independently of $\sigma$. This provides $C^{1,\alpha}$ estimates, uniform in $\sigma$ (see \cite{D83} and \cite{T84}).
\begin{theorem}\label{t3.1}
  If $u_{\sigma,\delta,\varepsilon}$ is a minimizer of $J_{\sigma,\delta,\varepsilon}$, then, for any compact set $K\in\R^n$ and for any $\alpha\in(0,1)$, there holds
  $$
  \|u_{\sigma,\delta,\varepsilon}\|_{C^{1,\alpha}(K)}\leq C\big(\|\varphi\|_{C^{1,1}}+\frac{1}{\varepsilon\delta}\big),
  $$
  where $C=C(K,\alpha,n,p)$ is a positive constant.
\end{theorem}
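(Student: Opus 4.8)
The plan is to extract from the Euler--Lagrange equation the fact that $u_{\sigma,\delta,\varepsilon}$ solves a $p$-Laplace equation with a right-hand side that is bounded in $L^\infty$ \emph{uniformly in} $\sigma$ (this is exactly the content of Lemma~\ref{l3.1}), and then to feed this into the interior gradient--Hölder regularity theory for quasilinear equations of $p$-Laplacian type. Concretely, writing $u=u_{\sigma,\delta,\varepsilon}$, equation \eqref{3.2} says that $u$ is a weak solution in $\R^n$ of $\Delta_p u=F$, where
$$
F:=g'_\sigma(u-\varphi)+f'_\varepsilon\Big(\int_{\Omega^c}h_\delta(u)\Big)h'_\delta(u)\chi_{\Omega^c}.
$$
From the definitions of $f_\varepsilon$ and $h_\delta$ one reads off $|f'_\varepsilon|\le\tfrac1\varepsilon$ and $|h'_\delta|\le\tfrac1\delta$, so the second term in $F$ is bounded by $\tfrac1{\varepsilon\delta}$, while Lemma~\ref{l3.1} bounds the first term by $\|\varphi\|_{C^{1,1}}+\tfrac1{\varepsilon\delta}$. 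Hence
$$
\|F\|_{L^\infty(\R^n)}\le 2\Big(\|\varphi\|_{C^{1,1}}+\tfrac1{\varepsilon\delta}\Big)=:2A,
$$
a bound independent of $\sigma$; and by Proposition~\ref{p2.1} we also have $0\le u\le\|\varphi\|_\infty\le A$, again uniformly in $\sigma$.

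With these two bounds in hand, I would invoke the interior $C^{1,\alpha}$ estimates of DiBenedetto~\cite{D83} and Tolksdorf~\cite{T84}: a bounded weak solution of $\Delta_p u=F$ with $F\in L^\infty$ belongs to $C^{1,\alpha}_{\loc}$, with an estimate on, say, $B_{1/2}$ controlled by $\|u\|_{L^\infty(B_1)}$ and $\|F\|_{L^\infty(B_1)}$ (and on $n,p,\alpha$ only). Since both quantities are bounded by $A$, this already gives an interior $C^{1,\alpha}$ bound in terms of $A$; to match the precise linear form stated, I would then use the homogeneity of $\Delta_p$ — normalising $u\mapsto u/A$ (together, if needed, with a dilation of the space variable) so that the rescaled function has unit sup norm and right-hand side bounded by a purely structural constant — to see that the estimate collapses to $\|u\|_{C^{1,\alpha}(B_{1/2})}\le C(n,p,\alpha)\,A$. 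A standard covering argument then upgrades $B_{1/2}$ to an arbitrary compact $K\Subset\R^n$ at the cost of a constant depending additionally on $K$, yielding
$$
\|u_{\sigma,\delta,\varepsilon}\|_{C^{1,\alpha}(K)}\le C(K,\alpha,n,p)\Big(\|\varphi\|_{C^{1,1}}+\tfrac1{\varepsilon\delta}\Big),
$$
and since $A$ and every step above are independent of $\sigma$, the bound is uniform in $\sigma$, which is the point.

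The only genuinely substantial ingredient is external: the interior gradient estimate for the degenerate/singular $p$-Laplacian, which we take from \cite{D83,T84}; the Euler--Lagrange computation and the covering are routine. On our side, the place that needs a little care is the scaling bookkeeping — tracking that the dependence on the data really collapses to the single linear factor $\|\varphi\|_{C^{1,1}}+\tfrac1{\varepsilon\delta}$ (and, in the regime of small $\varepsilon\delta$ that is the one used later, that the factor $A$ is $\ge1$, so no further dilation is even required).
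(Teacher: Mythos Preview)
Your proposal is correct and follows exactly the paper's approach: the paper simply observes that Lemma~\ref{l3.1} bounds the right-hand side of the Euler--Lagrange equation \eqref{3.2} independently of $\sigma$ and then invokes the interior $C^{1,\alpha}$ estimates of \cite{D83,T84}. Your write-up just makes explicit the details (the contribution of the $f'_\varepsilon h'_\delta$ term, the sup bound from Proposition~\ref{p2.1}, the scaling to extract the linear dependence on $\|\varphi\|_{C^{1,1}}+\tfrac1{\varepsilon\delta}$, and the covering) that the paper leaves implicit.
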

\begin{corollary}\label{c3.1}
 Up to a subsequence $\sigma\rightarrow0$, the function $u_{\sigma,\delta,\varepsilon}$ converges to a function $u_{\delta,\varepsilon}$, weakly in $W^{1,p}(\R^n)$ and locally uniformly in $C^{1,\alpha}(\R^n)$. Moreover, $u_{\delta,\varepsilon}\geq\varphi$.
\end{corollary}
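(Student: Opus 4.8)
The plan is to combine the $\sigma$-uniform regularity of Theorem \ref{t3.1} with standard compactness, and then to read off the obstacle inequality $u_{\delta,\e}\ge\varphi$ directly from the pointwise bound of Lemma \ref{l3.1}.

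First, fix $\delta,\e>0$. By Theorem \ref{t3.1}, for every closed ball $B\subset\R^n$ and every $\alpha\in(0,1)$ the family $\{u_{\sigma,\delta,\e}\}_{0<\sigma<1}$ is bounded in $C^{1,\alpha}(B)$ with a bound independent of $\sigma$. Since $C^{1,\alpha}(B)$ embeds compactly into $C^{1,\beta}(B)$ for $\beta<\alpha$, a diagonal argument over an exhaustion of $\R^n$ by closed balls yields a subsequence $\sigma\to0$ and a function $u_{\delta,\e}$ such that $u_{\sigma,\delta,\e}$ and $\nabla u_{\sigma,\delta,\e}$ converge to $u_{\delta,\e}$ and $\nabla u_{\delta,\e}$ locally uniformly on $\R^n$; lower semicontinuity of the Hölder seminorms under locally uniform convergence gives $u_{\delta,\e}\in C^{1,\alpha}_{\loc}(\R^n)$ for every $\alpha\in(0,1)$. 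For the weak $W^{1,p}$ statement, the bound $J_{\sigma,\delta,\e}(u_{\sigma,\delta,\e})\le M$ (which follows from the comparison in the proof of Proposition \ref{p2.1}), together with $g_\sigma\ge0$ and $f_\e(t)\ge-\e\gamma$ for $t\ge0$, gives $\tfrac{1}{p}\int|\nabla u_{\sigma,\delta,\e}|^p\,dx\le M+\e\gamma$; combined with \eqref{2.1}, this bounds $\{u_{\sigma,\delta,\e}\}$ in $W^{1,p}(\R^n)$ uniformly in $\sigma$, exactly as in Proposition \ref{p2.1}, so along a further subsequence $u_{\sigma,\delta,\e}$ converges to $u_{\delta,\e}$ weakly in $W^{1,p}(\R^n)$, the weak limit being identified with $u_{\delta,\e}$ via the locally uniform convergence.

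It remains to show $u_{\delta,\e}\ge\varphi$, and this is where the explicit form of $g_\sigma$ matters. By Lemma \ref{l3.1}, $\|g'_\sigma(u_{\sigma,\delta,\e}-\varphi)\|_\infty\le\|\varphi\|_{C^{1,1}}+\tfrac{1}{\e\delta}=:C_0$, with $C_0$ independent of $\sigma$. On the other hand $g'_\sigma\equiv-\tfrac{1}{\sigma}$ on $(-\infty,-\sigma)$; hence, if the (open) set $\{u_{\sigma,\delta,\e}-\varphi<-\sigma\}$ were nonempty, then by continuity of $u_{\sigma,\delta,\e}-\varphi$ it would contain a ball on which $g'_\sigma(u_{\sigma,\delta,\e}-\varphi)=-\tfrac{1}{\sigma}$, so that $\|g'_\sigma(u_{\sigma,\delta,\e}-\varphi)\|_\infty\ge\tfrac{1}{\sigma}$, contradicting the previous bound as soon as $\sigma<1/C_0$. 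Therefore $u_{\sigma,\delta,\e}\ge\varphi-\sigma$ on $\R^n$ for every sufficiently small $\sigma$, and passing to the limit $\sigma\to0$ along the chosen subsequence, using the locally uniform convergence, gives $u_{\delta,\e}\ge\varphi$.

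The compactness steps are routine; the only substantive point is the last paragraph, namely the observation that the $\sigma$-uniform estimate of Lemma \ref{l3.1} is incompatible with $u_{\sigma,\delta,\e}$ dropping more than $\sigma$ below $\varphi$, so the penalization carried by $g_\sigma$ automatically reinstates the obstacle constraint in the limit, leaving nothing to do beyond passing to the limit in the inequality $u_{\sigma,\delta,\e}\ge\varphi-\sigma$. The one place calling for a little care is the passage from the local bounds to the global weak $W^{1,p}(\R^n)$ convergence, which is handled by the energy estimate together with \eqref{2.1}, just as in the existence proof.
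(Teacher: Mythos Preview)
Your proof is correct. The compactness part is essentially the paper's argument (Theorem \ref{t3.1} plus Arzel\`a--Ascoli), with the weak $W^{1,p}$ convergence justified exactly as in Proposition \ref{p2.1}.

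The one genuine difference is in the verification of $u_{\delta,\e}\ge\varphi$. The paper argues via the \emph{energy} bound: from $\int g_\sigma(u_{\sigma,\delta,\e}-\varphi)\le M$ and the linear growth of $g_\sigma$ on $(-\infty,-\sigma)$ one gets $\frac{\epsilon}{2\sigma}\big|\{u_{\sigma,\delta,\e}-\varphi<-\epsilon/2\}\cap K\big|\le M$, forcing this set to have measure zero as $\sigma\to0$. You instead use the \emph{pointwise} bound of Lemma \ref{l3.1}: since $g'_\sigma\equiv-\tfrac{1}{\sigma}$ on $(-\infty,-\sigma)$, the estimate $\|g'_\sigma(u_{\sigma,\delta,\e}-\varphi)\|_\infty\le C_0$ rules out the set $\{u_{\sigma,\delta,\e}-\varphi<-\sigma\}$ altogether once $\sigma<1/C_0$, giving the quantitative inequality $u_{\sigma,\delta,\e}\ge\varphi-\sigma$ before passing to the limit. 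Your route is a bit sharper (a pointwise lower bound at each stage rather than a measure estimate in the limit) and makes fuller use of Lemma \ref{l3.1}, which in the paper is used only to feed into Theorem \ref{t3.1}; the paper's route, on the other hand, needs nothing beyond the crude energy bound $J_{\sigma,\delta,\e}(u_{\sigma,\delta,\e})\le M$ and would survive even without Lemma \ref{l3.1}.
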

\begin{proof}
The first part is a direct consequence of Theorem \ref{t3.1} and the Arzel\`a-Ascoli theorem. To see  that the limit $u_{\delta,\varepsilon}$ lies above $\varphi$, let $\epsilon>0$ and $K\subset\R^n$ be a compact set. Then, for small $\sigma>0$, we have the inclusion
  $$
  \{u_{\delta,\varepsilon}-\varphi<-\epsilon\}\subset\{u_{\sigma,\delta,\varepsilon}-\varphi<-\epsilon/2\}
  \cap K.
  $$
  Hence
  $$
  +\infty>M\geq\int g_\sigma(u_{\sigma,\delta,\varepsilon}-\varphi)\,dx\geq\frac{\epsilon}{2\sigma}
  |\{u_{\sigma,\delta,\varepsilon}-\varphi<-\epsilon/2\}\cap K|,
  $$
and  therefore $|\{u_{\sigma,\delta,\varepsilon}-\varphi<-\epsilon/2\}\cap K|=0$, since otherwise we get a contradiction if $\sigma>0$ is small enough.

\end{proof}

\section{Uniform Lipschitz regularity}\label{S4}

In the previous section, we were able to pass to the limit as $\sigma\rightarrow0$ to get a function $u_{\delta,\varepsilon}$ which still depends on the parameters $\delta$ and $\varepsilon$. One would expect the limit $u_{\delta,\varepsilon}$ from Corollary \ref{c3.1} to be a minimizer of the functional
$$
\frac{1}{p}\int|\nabla v|^p+f_\varepsilon\bigg(\int_{\Omega^c}h_\delta(v)\bigg)
$$
over functions which lie above $\varphi$. But this is not necessarily true since $u\mapsto f_\varepsilon\big(\int_{\Omega^c}h_\delta(u)\big)$ is not convex. However, we are able to show the uniform (in $\delta$) Lipschitz continuity of $u_{\delta,\varepsilon}$, which then allows one to let $\delta\rightarrow0$. We begin with an auxiliary lemma.
\begin{lemma}\label{l4.1}
For all $v\in W^{1,p}(\R^n)$, with $v\geq\varphi$, we have
  $$
  \int|\nabla v|^{p-2}\nabla v\cdot\nabla(v-u_{\delta,\varepsilon})+
  f_\varepsilon'\bigg(\int_{\Omega^c}h_\delta(u_{\delta,\varepsilon})\bigg)
  \int_{\Omega^c}h_\delta'(u_{\delta,\varepsilon})(v-u_{\delta,\varepsilon})\geq0.
  $$
  
\end{lemma}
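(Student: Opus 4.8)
The natural approach is to derive the inequality first for the approximating minimizers $u_\sigma:=u_{\sigma,\delta,\varepsilon}$ from Section \ref{S3}, and then pass to the limit $\sigma\to 0$ via Corollary \ref{c3.1}. Since $u_\sigma$ minimizes $J_{\sigma,\delta,\varepsilon}$ over the whole space $W^{1,p}(\R^n)$ (the obstacle and the volume constraints having been penalized rather than imposed), its Euler--Lagrange equation \eqref{3.2} holds, and testing it against the admissible variation $\psi=v-u_\sigma\in W^{1,p}(\R^n)$ yields
$$
\int|\nabla u_\sigma|^{p-2}\nabla u_\sigma\cdot\nabla(v-u_\sigma)
=-\int g'_\sigma(u_\sigma-\varphi)(v-u_\sigma)
-f'_\varepsilon\!\Big(\int_{\Omega^c}h_\delta(u_\sigma)\Big)\int_{\Omega^c}h'_\delta(u_\sigma)(v-u_\sigma).
$$

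The next step is the monotonicity (Minty) trick. Using $(|\xi|^{p-2}\xi-|\eta|^{p-2}\eta)\cdot(\xi-\eta)\ge 0$ with $\xi=\nabla v$ and $\eta=\nabla u_\sigma$ (the same inequality already exploited in the proof of Lemma \ref{l3.1}),
$$
\int|\nabla v|^{p-2}\nabla v\cdot\nabla(v-u_\sigma)\ \ge\ \int|\nabla u_\sigma|^{p-2}\nabla u_\sigma\cdot\nabla(v-u_\sigma),
$$
so that, combining with the identity above,
$$
\int|\nabla v|^{p-2}\nabla v\cdot\nabla(v-u_\sigma)
+f'_\varepsilon\!\Big(\int_{\Omega^c}h_\delta(u_\sigma)\Big)\int_{\Omega^c}h'_\delta(u_\sigma)(v-u_\sigma)
\ \ge\ -\int g'_\sigma(u_\sigma-\varphi)(v-u_\sigma).
$$
It then remains to observe that the right-hand side is nonnegative: $g_\sigma$ is nonincreasing, so $-g'_\sigma\ge 0$; and $g'_\sigma(u_\sigma-\varphi)$ is supported in $\{u_\sigma<\varphi\}$, where $v-u_\sigma\ge\varphi-u_\sigma>0$ because $v\ge\varphi$. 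Hence the integrand on the right is nonnegative pointwise, and we obtain the desired inequality with $u_{\delta,\varepsilon}$ replaced by $u_\sigma$, for every $\sigma>0$.

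Finally I would let $\sigma\to 0$. By Corollary \ref{c3.1}, $u_\sigma\rightharpoonup u_{\delta,\varepsilon}$ weakly in $W^{1,p}(\R^n)$ and $u_\sigma\to u_{\delta,\varepsilon}$ locally uniformly in $C^1$; since $|\nabla v|^{p-2}\nabla v\in L^{\frac{p}{p-1}}(\R^n)$, the first term passes to the limit by weak convergence of the gradients. The main obstacle is the volume-penalization term: because $h'_\delta$ and $f'_\varepsilon$ are only piecewise constant, local uniform convergence of $u_\sigma$ is not by itself enough, and one must additionally know that the level sets $\{u_{\delta,\varepsilon}=0\}$ and $\{u_{\delta,\varepsilon}=\delta\}$ — across which $h'_\delta\circ u$ jumps — are Lebesgue-null (a consequence of the equation satisfied by $u_{\delta,\varepsilon}$ in the limit, together with $0\le u_{\delta,\varepsilon}\le\|\varphi\|_\infty$), and that $\int_{\Omega^c}h_\delta(u_\sigma)$ converges to a value at which $f_\varepsilon$ is differentiable (or else that one argues with the appropriate one-sided derivative). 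Granting this, $f'_\varepsilon\big(\int_{\Omega^c}h_\delta(u_\sigma)\big)\int_{\Omega^c}h'_\delta(u_\sigma)(v-u_\sigma)$ converges to the corresponding quantity for $u_{\delta,\varepsilon}$, and the inequality of the lemma follows.
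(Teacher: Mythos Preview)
Your approach is essentially the paper's: derive the inequality at level $\sigma$ from the Euler--Lagrange equation plus the Minty monotonicity trick, dispose of the $g'_\sigma$-term by its sign (the paper phrases this as monotonicity of $g'_\sigma$, which is the same observation), and then let $\sigma\to 0$ using Corollary~\ref{c3.1}.

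Two small remarks on the limit step. First, the paper treats $h_\delta$ and $f_\varepsilon$ as smooth (in spite of the piecewise descriptions given in Section~\ref{S2}), so your worry about jump discontinuities of $h'_\delta$ and $f'_\varepsilon$ and the need for null level sets does not arise in that framework. Second, the point the paper \emph{does} isolate, and which you pass over, is that $\int_{\Omega^c}h'_\delta(u_\sigma)(v-u_\sigma)$ is an integral over an unbounded set, so local uniform convergence does not directly justify dominated convergence; the paper handles this by a tail estimate (using boundedness of $h'_\delta$ and $v-u_{\delta,\varepsilon}\in L^2$) together with dominated convergence on a large ball $B_r$.
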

\begin{proof}
  Since $u_{\sigma,\delta,\varepsilon}$ is a minimizer for $J_{\sigma,\delta,\varepsilon}$, then the functional
  $$
  F(t):=J_{\sigma,\delta,\varepsilon}(u_{\sigma,\delta,\varepsilon}+
  t(v-u_{\sigma,\delta,\varepsilon})),
  $$
  with $v\in W^{1,p}(\R^n)$ and $t \geq 0$, takes a minimum at $t=0$, which implies that $F'(0)\geq0$, i.e.,
  \begin{eqnarray}\label{4.1}
    &&\int|\nabla u_{\sigma,\delta,\varepsilon}|^{p-2}\nabla u_{\sigma,\delta,\varepsilon}\cdot\nabla(v-u_{\sigma,\delta,\varepsilon})
    +g_\sigma'(u_{\sigma,\delta,\varepsilon}-\varphi)(v-u_{\sigma,\delta,\varepsilon}) \nonumber \\
    &+&f_\varepsilon'\bigg(\int_{\Omega^c}h_\delta(u_{\sigma,\delta,\varepsilon})\bigg)
    \int_{\Omega^c}h_\delta'(u_{\sigma,\delta,\varepsilon})(v-u_{\sigma,\delta,\varepsilon})\geq0.
  \end{eqnarray}
  On the other hand, monotonicity of the $p$-Dirichlet energy and the function $g_\sigma'$ gives
  \begin{eqnarray*}
   && \int(|\nabla v|^{p-2}\nabla v-|\nabla u_{\sigma,\delta,\varepsilon}|^{p-2}\nabla u_{\sigma,\delta,\varepsilon})\cdot\nabla(v-u_{\sigma,\delta,\varepsilon})\nonumber\\
    &+&(g_\sigma'(v-\varphi)-g_\sigma'(u_{\sigma,\delta,\varepsilon}-\varphi))(v-u_{\sigma,\delta,\varepsilon})\geq0,
  \end{eqnarray*}
  which, together with \eqref{4.1}, provides
  \begin{eqnarray*}
    &&\int|\nabla v|^{p-2}\nabla v\cdot\nabla(v-u_{\sigma,\delta,\varepsilon})+g_\sigma'(v-\varphi)(v-u_{\sigma,\delta,\varepsilon}) \nonumber \\
    &+&f_\varepsilon'\bigg(\int_{\Omega^c}h_\delta(u_{\sigma,\delta,\varepsilon})\bigg)
    \int_{\Omega^c}h_\delta'(u_{\sigma,\delta,\varepsilon})(v-u_{\sigma,\delta,\varepsilon})\geq0,\,\,\,\forall v\in W^{1,p}(\R^n).
  \end{eqnarray*}
  In particular, for $v\geq\varphi$, we have
  \begin{eqnarray}\label{4.2}
     &&\int|\nabla v|^{p-2}\nabla v\cdot\nabla(v-u_{\sigma,\delta,\varepsilon})\\
   &+&f_\varepsilon'\bigg(\int_{\Omega^c}h_\delta(u_{\sigma,\delta,\varepsilon})\bigg)
    \int_{\Omega^c}h_\delta'(u_{\sigma,\delta,\varepsilon})(v-u_{\sigma,\delta,\varepsilon})\geq0.\nonumber
  \end{eqnarray}
  Our aim now is to show that we can pass to the limit in \eqref{4.2}, as $\sigma\rightarrow0$, to conclude the proof. Note that the weak convergence $u_{\sigma,\delta,\varepsilon}\rightharpoonup u_{\delta,\varepsilon}$ in $W^{1,p}$, as $\sigma\rightarrow0$, allows one to pass to the limit in the first term of \eqref{4.2}.  Since $f_\varepsilon$ and $h_\delta$ are smooth, we do not have any problems passing to the limit in the first part of the second term of \eqref{4.2}. As for the second part, the proof is the same as in \cite{Y16}. We bring it here for the sake of completeness. Observe that
  \begin{eqnarray*}
    &&\bigg|\int_{\Omega^c}h_\delta'(u_{\sigma,\delta,\varepsilon})(v-u_{\sigma,\delta,\varepsilon})-\int_{\Omega^c}h_\delta'(u_{\delta,\varepsilon})(v-u_{\delta,\varepsilon})\bigg|\nonumber\\
    &\leq& \bigg|\int_{\Omega^c}h_\delta'(u_{\sigma,\delta,\varepsilon})(u_{\sigma,\delta,\varepsilon}-v)\bigg|
    +\bigg|\int_{\Omega^c}(h_\delta'(u_{\sigma,\delta,\varepsilon})-h_\delta'(u_{\delta,\varepsilon}))(v-u_{\delta,\varepsilon})\bigg|\nonumber \\
    &\leq& C(\delta)\|u_{\sigma,\delta,\varepsilon}-u_{\delta,\varepsilon}\|_{L^2}+\bigg|\int_{\Omega^c}(h_\delta'(u_{\sigma,\delta,\varepsilon})-h_\delta'(u_{\delta,\varepsilon}))(v-u_{\delta,\varepsilon})\bigg|\nonumber \\
    &=& o(1)+\bigg|\int_{\Omega^c}(h_\delta'(u_{\sigma,\delta,\varepsilon})-h_\delta'(u_{\delta,\varepsilon}))(v-u_{\delta,\varepsilon})\bigg|.
  \end{eqnarray*}
  Hence, if
  \begin{equation}\label{4.3}
    \bigg|\int_{\Omega^c}(h_\delta'(u_{\sigma,\delta,\varepsilon})-h_\delta'(u_{\delta,\varepsilon}))(v-u_{\delta,\varepsilon})\bigg|\rightarrow0,
  \end{equation}
  as $\sigma\rightarrow0$, then the proof will be finished. Note that $(h_\delta'(u_{\sigma,\delta,\varepsilon})-h_\delta'(u_{\delta,\varepsilon}))$ is bounded, and $(u-v)\in L^2$. Therefore, for a given $\epsilon>0$, there exists a large enough $r$ guaranteeing 
  $$
  \bigg|\int_{\Omega^c}(h_\delta'(u_{\sigma,\delta,\varepsilon})-h_\delta'(u_{\delta,\varepsilon}))(v-u_{\delta,\varepsilon})
  -\int_{\Omega^c\cap B_r}(h_\delta'(u_{\sigma,\delta,\varepsilon})-h_\delta'(u_{\delta,\varepsilon}))(v-u_{\delta,\varepsilon})\bigg|
  \leq\epsilon,
  $$
  where $B_r$ is the ball of radius $r$. Applying the dominated convergence theorem on the compact set $B_r$, one concludes
  $$
  \int_{\Omega^c\cap B_r}(h_\delta'(u_{\sigma,\delta,\varepsilon})-h_\delta'(u_{\delta,\varepsilon}))(v-u_{\delta,\varepsilon})\rightarrow0.
  $$
  Thus, \eqref{4.3} is true, which means that we can pass to the limit in \eqref{4.2} to conclude the proof of the lemma.
\end{proof}
\begin{corollary}\label{c4.1}
The function $u_{\delta,\varepsilon}$ satisfies
$$
    \Delta_p u_{\delta,\varepsilon}=f_\varepsilon'\bigg(\int_{\Omega^c}h_\delta(u_{\delta,\varepsilon})\bigg)h_\delta'(u_{\delta,\varepsilon})\chi_{\Omega^c}
    \,\,\textrm{ in }\,\,\{u_{\delta,\varepsilon}>\varphi\},
$$
  and
  \begin{equation}\label{4.5}
    -\|\varphi\|_{C^{1,1}}-\frac{1}{\varepsilon\delta}\leq\Delta_p u_{\delta,\varepsilon}\leq f_\varepsilon'\bigg(\int_{\Omega^c}h_\delta(u_{\delta,\varepsilon})\bigg)h_\delta'(u_{\delta,\varepsilon})\chi_{\Omega^c}
    \,\,\textrm{ in }\,\,\R^n.
  \end{equation}
\end{corollary}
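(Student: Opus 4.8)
The plan is to extract both assertions from the variational inequality in Lemma~\ref{l4.1}, supplemented by a return to the Euler--Lagrange equation \eqref{3.2} for the lower bound in \eqref{4.5}. Since Lemma~\ref{l4.1} only admits competitors $v\geq\varphi$, the natural perturbations are $v=u_{\delta,\varepsilon}+t\xi$ with $\xi\geq0$, which is admissible for every $t>0$, and, on the open set $\{u_{\delta,\varepsilon}>\varphi\}$, the two-sided perturbations $v=u_{\delta,\varepsilon}\pm t\xi$ with $\xi\in C_c^\infty(\{u_{\delta,\varepsilon}>\varphi\})$ of arbitrary sign, which are admissible for $|t|$ small because $u_{\delta,\varepsilon}-\varphi$ is continuous and hence bounded below by a positive constant on the compact set $\mathrm{supp}\,\xi$.

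First I would plug $v=u_{\delta,\varepsilon}+t\xi$, $\xi\geq0$, into Lemma~\ref{l4.1}, divide by $t>0$, and let $t\to0^+$. Since $\xi$ has compact support, $|\nabla(u_{\delta,\varepsilon}+t\xi)|^{p-2}\nabla(u_{\delta,\varepsilon}+t\xi)\cdot\nabla\xi\to|\nabla u_{\delta,\varepsilon}|^{p-2}\nabla u_{\delta,\varepsilon}\cdot\nabla\xi$ a.e. and is dominated, uniformly in $t\in(0,1]$, by $(|\nabla u_{\delta,\varepsilon}|+|\nabla\xi|)^{p-1}|\nabla\xi|\in L^1(\R^n)$; the second term of the inequality does not depend on $t$ after dividing. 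Dominated convergence then yields
$$\int|\nabla u_{\delta,\varepsilon}|^{p-2}\nabla u_{\delta,\varepsilon}\cdot\nabla\xi+f_\varepsilon'\Big(\int_{\Omega^c}h_\delta(u_{\delta,\varepsilon})\Big)\int_{\Omega^c}h_\delta'(u_{\delta,\varepsilon})\xi\geq0$$
for all $\xi\geq0$, which is exactly the distributional inequality $\Delta_p u_{\delta,\varepsilon}\leq f_\varepsilon'\big(\int_{\Omega^c}h_\delta(u_{\delta,\varepsilon})\big)h_\delta'(u_{\delta,\varepsilon})\chi_{\Omega^c}$ on $\R^n$. Running the same computation with $\xi\in C_c^\infty(\{u_{\delta,\varepsilon}>\varphi\})$ and letting $t\to0^+$ and then $t\to0^-$ (the latter reversing the inequality) upgrades it to an equality on $\{u_{\delta,\varepsilon}>\varphi\}$, which is the first claim.

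For the lower bound I would return to \eqref{3.2}. Because $g_\sigma$ is decreasing, $g_\sigma'\leq0$, so Lemma~\ref{l3.1} gives $g_\sigma'(u_{\sigma,\delta,\varepsilon}-\varphi)\geq-\|\varphi\|_{C^{1,1}}-\tfrac1{\varepsilon\delta}$; moreover $f_\varepsilon'\geq0$ and $h_\delta'\geq0$ since $f_\varepsilon$ and $h_\delta$ are non-decreasing, so the second summand on the right of \eqref{3.2} is non-negative. Hence $\Delta_p u_{\sigma,\delta,\varepsilon}\geq-\|\varphi\|_{C^{1,1}}-\tfrac1{\varepsilon\delta}$ a.e. in $\R^n$. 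Testing against a non-negative $\xi\in C_c^\infty(\R^n)$ and using the local $C^{1,\alpha}$ convergence $u_{\sigma,\delta,\varepsilon}\to u_{\delta,\varepsilon}$ from Corollary~\ref{c3.1} — which forces $|\nabla u_{\sigma,\delta,\varepsilon}|^{p-2}\nabla u_{\sigma,\delta,\varepsilon}\to|\nabla u_{\delta,\varepsilon}|^{p-2}\nabla u_{\delta,\varepsilon}$ uniformly on $\mathrm{supp}\,\xi$ — one passes to the limit $\sigma\to0$ and obtains $\Delta_p u_{\delta,\varepsilon}\geq-\|\varphi\|_{C^{1,1}}-\tfrac1{\varepsilon\delta}$ in $\R^n$. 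Combined with the upper bound just proved, this is \eqref{4.5}.

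The only delicate points are bookkeeping: verifying that the perturbations remain admissible (which uses the strict positivity of $u_{\delta,\varepsilon}-\varphi$ on compact subsets of $\{u_{\delta,\varepsilon}>\varphi\}$) and justifying the $t\to0$ passage in the monotone operator $\xi\mapsto|\nabla(u_{\delta,\varepsilon}+t\xi)|^{p-2}\nabla(u_{\delta,\varepsilon}+t\xi)$. Neither is a genuine obstacle; the main thing to keep in mind is that the \emph{lower} bound in \eqref{4.5} cannot be read off Lemma~\ref{l4.1} directly, because only one-sided perturbations are available there, and must instead be inherited from the uniform estimate of Lemma~\ref{l3.1} through \eqref{3.2}.
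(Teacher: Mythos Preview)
Your proof is correct and follows the same path as the paper: the upper bound in \eqref{4.5} and the equality on $\{u_{\delta,\varepsilon}>\varphi\}$ come from Lemma~\ref{l4.1} via one-sided (respectively two-sided) perturbations, while the lower bound is inherited from \eqref{3.2} together with the uniform estimate of Lemma~\ref{l3.1}. The paper's proof is extremely terse (``the rest is a consequence of Lemma~\ref{l4.1}''), and what you have written is precisely the expected unpacking of that sentence; your use of the local $C^{1,\alpha}$ convergence from Corollary~\ref{c3.1} to pass to the limit in $|\nabla u_{\sigma,\delta,\varepsilon}|^{p-2}\nabla u_{\sigma,\delta,\varepsilon}$ is in fact cleaner than the paper's appeal to weak $W^{1,p}$ convergence, which by itself would not suffice for the nonlinear term.
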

\begin{proof}
    It remains to check the lower bound of \eqref{4.5}, since the rest is a consequence of Lemma \ref{l4.1}. The weak convergence of $u_{\sigma,\delta,\varepsilon}\rightharpoonup u_{\delta,\varepsilon}$ in $W^{1,p}$ and the uniform bound on the right hand side of \eqref{3.2} give the lower bound in \eqref{4.5}.
  \end{proof}

In order to pass to the limit as $\delta\rightarrow0$, we need to prove estimates which are uniform in $\delta$. The proof of the following theorem is from \cite{Y16}, with small adaptations.
\begin{theorem}[Optimal regularity]\label{t4.1}
There exists a constant $C>0$, depending only on $n$ and $p$, such that
  $$
  |\nabla u_{\delta,\varepsilon}|\leq C(\|\varphi\|_{C^1}+\delta\|\varphi\|_{C^{1,1}}+
  \frac{1}{\varepsilon}).
  $$
\end{theorem}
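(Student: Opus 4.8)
The estimate must be uniform in $\delta$, whereas \eqref{4.5} controls $\Delta_p u_{\delta,\varepsilon}$ only by $\|\varphi\|_{C^{1,1}}+\tfrac1{\varepsilon\delta}$, which blows up as $\delta\to0$; reconciling the two is the whole point. The mechanism is that the singular part of the forcing, of size $\tfrac1{\varepsilon\delta}$, is confined to the thin layer $\{0<u_{\delta,\varepsilon}<\delta\}\cap\Omega^c$, on which $u_{\delta,\varepsilon}<\delta$, so its true order of influence is $\delta\cdot\tfrac1{\varepsilon\delta}=\tfrac1\varepsilon$. I would make this precise by a comparison argument, following \cite{Y16}. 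Write $u:=u_{\delta,\varepsilon}$ and $\Lambda:=\|\varphi\|_{C^1}+\delta\|\varphi\|_{C^{1,1}}+\tfrac1\varepsilon$.

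The key device is the substitution $w:=(u-\delta)_+$: because $h_\delta'(u)\equiv0$ on $\{u>\delta\}$, Corollary~\ref{c4.1} shows $\Delta_p w=0$ on $\{u>\delta\}\cap\Omega^c$ and $\Delta_p w\le0$ on $\{u>\delta\}\cap\Omega$, so the singular forcing disappears and $w$ solves a forcing-free obstacle problem for $\Delta_p$ with the Lipschitz obstacle $\psi:=(\varphi-\delta)_+$, where $\|\nabla\psi\|_\infty\le\|\varphi\|_{C^1}$ and $\{w=0\}=\{u\le\delta\}\subset\{\psi=0\}$. Since $\nabla u=\nabla w$ wherever $u>\delta$, and $\nabla u=\nabla\varphi$ on the coincidence set $\{u=\varphi\}$, where $|\nabla u|\le\|\varphi\|_{C^1}$, the bound reduces to (i) a Lipschitz bound for $w$, and (ii) a bound for $|\nabla u|$ on the layer $\{0<u<\delta\}$. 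For (ii) I would use the rescaled interior gradient estimate for $\Delta_p$ on a ball $B_a(x)$, with $a$ at most the distance from $x$ to $\partial\{0<u<\delta\}$ and at most $\varepsilon^{1/p}\delta$: this gives $|\nabla u(x)|\lesssim\tfrac{\delta}{a}+\bigl(\tfrac{a}{\varepsilon\delta}\bigr)^{1/(p-1)}$, which at $a\simeq\varepsilon^{1/p}\delta$ equals $\simeq\varepsilon^{-1/p}\le\tfrac1\varepsilon$; the points of the layer lying within $\varepsilon^{1/p}\delta$ of $\{u=\delta\}$, resp. of $\{u=0\}$, are reabsorbed into (i), resp. into the linear growth of $u$ near its free boundary.

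For (i) and the linear growth I would argue by comparison, as in \cite{Y16}. At the level of the approximations $u_{\sigma,\delta,\varepsilon}$ — which, unlike $u$, are genuine minimizers (Proposition~\ref{p2.1}) — one compares, in a ball $B_{2\rho}(x_0)$ centered at a free-boundary point chosen so that $\varphi\equiv0$ on it, with the $p$-harmonic replacement $v_\sigma\ge u_{\sigma,\delta,\varepsilon}$; the $g_\sigma$-terms vanish (as $u_{\sigma,\delta,\varepsilon},v_\sigma\ge0=\varphi$ there), and the $\tfrac1\varepsilon$- and $\tfrac1\delta$-Lipschitz bounds on $f_\varepsilon,h_\delta$, together with $h_\delta(v_\sigma)=h_\delta(u_{\sigma,\delta,\varepsilon})$ on $\{u_{\sigma,\delta,\varepsilon}\ge\delta\}$, give an energy estimate which, after $\sigma\to0$, becomes
$$\int_{B_{2\rho}(x_0)}|\nabla(u-v)|^p\ \le\ \frac{C}{\varepsilon\delta}\int_{\{0\le u<\delta\}\cap B_{2\rho}(x_0)}(v-u),$$
$v$ being the $p$-harmonic replacement of $u$. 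Combined with Poincar\'e's inequality, this yields an energy decay that iterates on dyadic scales into the linear growth $\sup_{B_\rho(x_0)}u\le C(n,p)\,\Lambda\,\rho$; feeding it, and the $C^{1,\alpha}$-estimates of Theorem~\ref{t3.1}, back into the interior gradient estimate closes (i) and the proof, the obstacle $\psi$ and the width-$\delta$ slab on which $\Delta_p\varphi$ still acts accounting for the terms $\|\varphi\|_{C^1}$ and $\delta\|\varphi\|_{C^{1,1}}$. The main obstacle is precisely this iteration: one must bound the measure of the thin layer $\{0\le u<\delta\}\cap B_{2\rho}(x_0)$ above — so that the singular factor $\tfrac1{\varepsilon\delta}$ is genuinely neutralized — in terms of the growth already established at the coarser scales, and separately patch the argument near $\supp\varphi$; the dyadic summation and the interior-estimate bookkeeping are routine.
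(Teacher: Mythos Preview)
Your approach is considerably more intricate than the paper's and leaves its acknowledged ``main obstacle'' --- the dyadic iteration controlling the measure of the thin layer --- unresolved, so as written it is not a complete proof. You are right that the issue is neutralizing the factor $\tfrac{1}{\varepsilon\delta}$ coming from \eqref{4.5}, but you miss the much simpler device the paper uses: a pure rescaling by $\delta$.

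Here is the mechanism. If $u(x_0)\le\delta$, set $w(y):=\tfrac{1}{\delta}\,u(x_0+\delta y)$. Because $\Delta_p$ is $(p-1)$-homogeneous in the gradient and picks up one extra $\delta$ from the spatial scaling, \eqref{4.5} becomes
\[
-\delta\|\varphi\|_{C^{1,1}}-\tfrac{1}{\varepsilon}\ \le\ \Delta_p w\ \le\ \tfrac{1}{\varepsilon},
\]
i.e.\ the $\tfrac{1}{\varepsilon\delta}$ has already become $\tfrac{1}{\varepsilon}$, with no iteration, no energy comparison, no Poincar\'e. Since $w\ge0$ and $w(0)\le1$, the Harnack inequality and interior gradient estimates for the $p$-Laplacian give $|\nabla w(0)|=|\nabla u(x_0)|\le C(n,p)\bigl(\delta\|\varphi\|_{C^{1,1}}+\tfrac{1}{\varepsilon}+1\bigr)$. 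The contact set $\{u=\varphi\}$ is handled by the same rescaling applied to $u-\varphi$. For the remaining region $\{u>\varphi\}\cap\{u>\delta\}$, where $\Delta_p u=0$ (as you observed, $h_\delta'(u)=0$ there), one rescales by the distance $d$ to the boundary of this region, uses the Harnack inequality and an explicit annular barrier to transfer the bound already established at the nearest boundary point back to $x_0$.

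Your comparison-with-$p$-harmonic-replacement scheme is a legitimate Alt--Caffarelli-type strategy, and your scale-balancing computation in (ii) is essentially a disguised version of the paper's scaling (your choice $a\simeq\varepsilon^{1/p}\delta$ reproduces $|\nabla u|\lesssim\varepsilon^{-1/p}$ at interior layer points). But the patching near $\partial\{0<u<\delta\}$ that you defer, and the layer-measure control you need for the iteration, are real work that the paper's direct rescaling bypasses entirely.
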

\begin{proof}
  We divide the proof into three steps.\\

  \textsc{Step 1.} Let $x_0\in\{u_{\delta,\varepsilon}\leq\delta\}$ and let $w(y):=\frac{1}{\delta}u_{\delta,\varepsilon}(x_0+\delta y)$. Then $w(0)\leq1$ and, from \eqref{4.5}, we also have
  $$
  -\delta\|\varphi\|_{C^{1,1}}-\frac{1}{\varepsilon}\leq\Delta_p w\leq\frac{1}{\varepsilon}.
  $$
  Thus, $w$ is a non-negative function with bounded $p$-Laplacian and then 
  $$|w|\leq C(n,p)(\delta\|\varphi\|_{C^{1,1}}+\frac{1}{\varepsilon}+1)\quad \mathrm{in} \ B_1, $$ 
  for a constant $C(n,p)>0$. Hence, the bound holds for $|\nabla w(0)|$ (see, for example, \cite{U68}), and we conclude by noting that  $\nabla w(0)=\nabla u_{\delta,\varepsilon}(x_0)$.\\

  \textsc{Step 2.} Let $x_0\in\{u_{\delta,\varepsilon}=\varphi\}$. For $w(y):=\frac{1}{\delta}(u_{\delta,\varepsilon}(x_0+\delta y)-\varphi(x_0+\delta y))$ one has $w\geq0$, $w(0)=0$ and
  $$
  -2\delta\|\varphi\|_{C^{1,1}}-\frac{1}{\varepsilon}\leq
  \Delta_p w\leq\delta\|\varphi\|_{C^{1,1}}+\frac{1}{\varepsilon}.
  $$
  Observe that $\nabla u_{\delta,\varepsilon}(x_0)=\nabla w(0)+\nabla\varphi(x_0)$, and therefore, as in the Step 1, the elliptic regularity theory provides
  $$
  |\nabla u_{\delta,\varepsilon}(x_0)|\leq C(n,p)(\|\varphi\|_{C^1}+\delta\|\varphi\|_{C^{1,1}}+\frac{1}{\varepsilon}+1).
  $$

  \textsc{Step 3.} Let now $x_0\in\{u_{\delta,\varepsilon}>\varphi\}\cap\{u_{\delta,\varepsilon}>\delta\}$. Define
  $$
  d=\dist(x_0,\partial(\{u_{\delta,\varepsilon}>\varphi\}\cap\{u_{\delta,\varepsilon}>\delta\})),
  $$
  and let $y_0$ be the point where the distance is attained, i.e., $|x_0-y_0|=d$. In particular, one has $u_{\delta,\varepsilon}(y_0)=\delta$ or $u_{\delta,\varepsilon}(y_0)=\varphi(y_0)$.

  In case $u_{\delta,\varepsilon}(y_0)=\delta$, the function $w(y):=\frac{1}{d}(u_{\delta,\varepsilon}(x_0+dy)-\delta)$ is a non-negative $p$-harmonic function in $B_1$, and, for a point $\tilde{y}_0\in\partial B_1$ corresponding to $y_0$, one has $w(\tilde{y}_0)=0$. Hence, Step 1 can be applied to get
  \begin{equation}\label{4.6}
    |\nabla w(\tilde{y}_0)|\leq C(n,p)(\delta\|\varphi\|_{C^{1,1}}+\frac{1}{\varepsilon}+1).
  \end{equation}
  On the other hand, for a constant $c(n,p)>0$, the  Harnack inequality provides the lower bound
  \begin{equation}\label{4.7}
    w(y)\geq c(n,p)w(0)\,\,\textrm{ in }\,\,B_{1/2}.
  \end{equation}
  Now let $v$ be the function which is $p$-harmonic in the ring $B_1\setminus B_{1/2}$, vanishes on $\partial B_1$ and equals $c(n,p)w(0)$ along $\partial B_{1/2}$. By the comparison principle, one has $v\leq w$ in $B_1\setminus B_{1/2}$. But since $w(\tilde{y}_0)=v(\tilde{y}_0)$, 
  $$\nabla w(\tilde{y}_0)\cdot\nu(\tilde{y}_0)\geq\nabla v(\tilde{y}_0)\cdot\nu(\tilde{y}_0),$$
  where $\nu$ is the inner normal vector to $B_1$. Hence, combining this with \eqref{4.6} and \eqref{4.7}, we arrive at
  $$
  w(y)\leq C(n,p)(\delta\|\varphi\|_{C^{1,1}}+\frac{1}{\varepsilon}+1)\,\,\textrm{ in }\,\,B_{1/2},
  $$
  and again, as in the previous cases, the elliptic regularity theory gives
  $$
  |\nabla u_{\delta,\varepsilon}(x_0)|=|\nabla w(0)|\leq C(n,p)(\delta\|\varphi\|_{C^{1,1}}+\frac{1}{\varepsilon}+1).
  $$

  If $u_{\delta,\varepsilon}(y_0)=\varphi(y_0)$, then $w(y):=\frac{1}{d}(u_{\delta,\varepsilon}(x_0+dy)-\varphi(x_0+dy))$ is a non-negative $p$-harmonic function in $B_1$ which vanishes at a point on $\partial B_1$, where one has the estimate from Step 2. Then, as above, a barrier argument can be used to conclude the proof. 
 
Since the gradient of $u_{\delta,\varepsilon}$ has a jump along the free boundary $\partial\{u_{\delta,\varepsilon}>0\}$, the Lipschitz regularity is optimal (see \cite{AC81}).
\end{proof}
As a consequence of Theorem \ref{t4.1} and the Arzel\`a-Ascoli theorem we obtain the next result. Observe that we can fix $p>n$, bound the $W^{1,p}$-norm by energy considerations and directly obtain the H\"older continuity by embedding (see \cite{AT, LQT, MRU09}).

\begin{corollary}\label{c4.2}
  If $u_{\sigma,\delta,\varepsilon}$ is a minimizer of $J_{\sigma,\delta,\varepsilon}$, then $u_{\sigma,\delta,\varepsilon}$ converges weakly (up to a subsequence as $\sigma,\delta\rightarrow0$) in $W^{1,p}$ to a function $u_\varepsilon$. This convergence is locally uniform in $C^{\alpha}$, for any $\alpha\in(0,1)$. Moreover, there exists a constant $C=C(n,p)>0$ such that
  $$
  |\nabla u_\varepsilon|\leq C(\|\varphi\|_{C^1}+\frac{1}{\varepsilon}).
  $$
\end{corollary}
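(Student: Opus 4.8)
The plan is to combine the uniform gradient bound from Theorem \ref{t4.1} with a compactness argument, exactly as suggested by the sentence preceding the statement. First I would record that, by Theorem \ref{t4.1}, the family $\{u_{\delta,\varepsilon}\}$ obtained in Corollary \ref{c3.1} satisfies $|\nabla u_{\delta,\varepsilon}| \le C(\|\varphi\|_{C^1}+\delta\|\varphi\|_{C^{1,1}}+\frac{1}{\varepsilon})$ with $C=C(n,p)$; together with the pointwise bound $0\le u_{\delta,\varepsilon}\le\|\varphi\|_\infty$ inherited from \eqref{2.1} through the locally uniform convergence in Corollary \ref{c3.1}, this shows the $u_{\delta,\varepsilon}$ are uniformly bounded in $C^{0,1}(K)$ on every compact $K$, with a bound that stays controlled as $\delta\to0$ (say for $\delta\le1$, so that $\delta\|\varphi\|_{C^{1,1}}\le\|\varphi\|_{C^{1,1}}$, or more simply absorbing it). By Arzel\`a--Ascoli and a diagonal argument over an exhausting sequence of compacts, there is a subsequence $\delta\to0$ along which $u_{\delta,\varepsilon}\to u_\varepsilon$ locally uniformly, for some $u_\varepsilon\in C^{0,1}_{\loc}(\R^n)$, and passing to the limit in the gradient bound (which is lower semicontinuous under local uniform convergence of the functions, or one argues via weak-$*$ convergence of the gradients) yields $|\nabla u_\varepsilon|\le C(\|\varphi\|_{C^1}+\frac{1}{\varepsilon})$.

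Next I would upgrade local uniform convergence to weak convergence in $W^{1,p}$. Fix $p>n$. The uniform Lipschitz bound on compact sets, combined with the fact that (by the comparison argument in Step 1 of Theorem \ref{t4.1}, or directly since $0\le u_{\delta,\varepsilon}\le\|\varphi\|_\infty$ with $\{u_{\delta,\varepsilon}>0\}$ contained in a fixed neighbourhood of $\Omega$ up to a controlled volume) $u_{\delta,\varepsilon}$ is supported, or at least bounded below by $\varphi$, in a way that keeps $\|u_{\delta,\varepsilon}\|_{W^{1,p}(\R^n)}$ uniformly bounded, gives a uniform bound in $W^{1,p}(\R^n)$; alternatively one invokes the energy bound $\frac{1}{p}\int|\nabla u_{\sigma,\delta,\varepsilon}|^p \le M$ from the proof of Proposition \ref{p2.1}, which survives the limit $\sigma\to0$ by weak lower semicontinuity and hence bounds $\|\nabla u_{\delta,\varepsilon}\|_{L^p}$ independently of $\delta$. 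Reflexivity of $W^{1,p}$ then yields a weakly convergent subsequence, and its limit must coincide with $u_\varepsilon$ since weak $W^{1,p}$ convergence implies (via compact Sobolev embedding, $p>n$) local uniform convergence, matching the limit already identified. Finally, the locally uniform $C^\alpha$ convergence for any $\alpha\in(0,1)$ follows from the uniform Lipschitz bound on compacts by interpolation: a bounded sequence in $C^{0,1}(K)$ converging uniformly on $K$ converges in $C^{0,\alpha}(K)$ for every $\alpha<1$, by the standard interpolation inequality $\|f\|_{C^{0,\alpha}}\le \|f\|_{L^\infty}^{1-\alpha}\|f\|_{C^{0,1}}^{\alpha}$ applied to the differences.

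The only genuinely delicate point is making sure the $W^{1,p}(\R^n)$ bound (not merely the local Lipschitz bound) is uniform in $\delta$, because the positivity set could a priori spread out; this is handled by the energy estimate $M$ inherited from Proposition \ref{p2.1} and Corollary \ref{c3.1} together with the $L^\infty$ bound \eqref{2.1}, which confine $u_{\delta,\varepsilon}$ in a fixed $W^{1,p}$ ball. Everything else is routine compactness. I would therefore structure the proof as: (i) uniform $C^{0,1}_{\loc}$ and uniform $W^{1,p}(\R^n)$ bounds from Theorem \ref{t4.1}, Proposition \ref{p2.1} and Corollary \ref{c3.1}; (ii) extract a subsequence $\delta\to0$ with weak $W^{1,p}$ limit $u_\varepsilon$, identified with the locally uniform limit via compact embedding; (iii) pass to the limit in the gradient bound; (iv) deduce locally uniform $C^\alpha$ convergence by interpolation. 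Combining with the already-extracted subsequence $\sigma\to0$ from Corollary \ref{c3.1} gives the stated convergence as $\sigma,\delta\to0$.
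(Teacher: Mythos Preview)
Your proposal is correct and follows essentially the same approach as the paper, which merely records the result as a direct consequence of Theorem \ref{t4.1} and the Arzel\`a--Ascoli theorem, noting that one can fix $p>n$, bound the $W^{1,p}$-norm by energy considerations (the constant $M$ from Proposition \ref{p2.1}), and obtain H\"older continuity by embedding. Your write-up simply fills in the routine details the paper leaves implicit.
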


\section{Regularity of the free boundaries}\label{S5}
In this section we see that the function $u_\varepsilon$ from Corollary \ref{c4.2} is a minimizer for a certain functional. This yields information on the regularity of the two free boundaries of the problem, namely, the interior free boundary
$$\partial \left( \{ u_\varepsilon > \varphi \} \cap \Omega \right)$$
and the exterior free boundary
$$\partial \left( \{ u_\varepsilon > 0 \}  \right).$$

\begin{theorem}\label{t5.1}
The function $u_\varepsilon$ is a local minimizer of
  $$
  J_\varepsilon(u):=\frac{1}{p}\int|\nabla u|^p+
  f_\varepsilon(|\{u>0\}\setminus\Omega|)
  $$
  over the functions in $W^{1,p} (\R^n)$ which lie above $\varphi$.
\end{theorem}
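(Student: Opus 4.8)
The plan is to pass to the limit in the minimality inequality for $u_{\sigma,\delta,\varepsilon}$, first as $\sigma\to 0$ (Corollary~\ref{c3.1}) and then as $\delta\to 0$ (using the uniform Lipschitz bound of Theorem~\ref{t4.1} and the Arzel\`a-Ascoli theorem), taking advantage of the fact that the regularisations $g_\sigma$ and $h_\delta$ act only in the favourable direction. Fix $R>0$ and let $v\in W^{1,p}(\R^n)$ with $v\geq\varphi$ and $v=u_\varepsilon$ outside $B_R$; we must show $J_\varepsilon(u_\varepsilon)\leq J_\varepsilon(v)$. Since $v\geq\varphi$, we have $g_\sigma(v-\varphi)\equiv 0$, so testing the minimality of $u_{\sigma,\delta,\varepsilon}$ against $v$ gives
\begin{equation*}
\frac1p\int|\nabla u_{\sigma,\delta,\varepsilon}|^p+g_\sigma(u_{\sigma,\delta,\varepsilon}-\varphi)+f_\varepsilon\Big(\int_{\Omega^c}h_\delta(u_{\sigma,\delta,\varepsilon})\Big)\leq\frac1p\int|\nabla v|^p+f_\varepsilon\Big(\int_{\Omega^c}h_\delta(v)\Big).
\end{equation*}
I would first record that $|\{u_\varepsilon>0\}\setminus\Omega|<\infty$: from the uniform bound on $J_{\sigma,\delta,\varepsilon}(u_{\sigma,\delta,\varepsilon})$ of Proposition~\ref{p2.1} and the linear growth of $f_\varepsilon$ at $+\infty$, the quantities $\int_{\Omega^c}h_\delta(u_{\sigma,\delta,\varepsilon})$ are bounded uniformly in $\sigma,\delta$; two applications of Fatou's lemma (along $\sigma\to 0$, then $\delta\to 0$, using the pointwise convergence furnished by the local uniform convergence of the minimizers) transfer this bound to $|\{u_\varepsilon>0\}\setminus\Omega|$. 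Since $v-u_\varepsilon$ has bounded support, the same holds for $v$.

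For the right-hand side, as $\delta\to 0$ one has $0\leq h_\delta(v)\leq\chi_{\{v>0\}}$ and $h_\delta(v)\to\chi_{\{v>0\}}$ pointwise; the dominating function is integrable on $\Omega^c$ by the previous paragraph, so dominated convergence gives $\int_{\Omega^c}h_\delta(v)\to|\{v>0\}\setminus\Omega|$, and the continuity of $f_\varepsilon$ yields convergence of the right-hand side to $J_\varepsilon(v)$. For the left-hand side I would discard the non-negative term $g_\sigma(u_{\sigma,\delta,\varepsilon}-\varphi)$, use weak lower semicontinuity of the Dirichlet integral, and handle the volume term by Fatou: since $h_\delta$ is continuous and $u_{\sigma,\delta,\varepsilon}\to u_{\delta,\varepsilon}$ pointwise, $\liminf_{\sigma\to 0}\int_{\Omega^c}h_\delta(u_{\sigma,\delta,\varepsilon})\geq\int_{\Omega^c}h_\delta(u_{\delta,\varepsilon})$, and then, as $u_{\delta,\varepsilon}>\delta$ eventually at every point where $u_\varepsilon>0$, one gets $\liminf_{\delta\to 0}h_\delta(u_{\delta,\varepsilon})\geq\chi_{\{u_\varepsilon>0\}}$ pointwise, whence $\liminf_{\delta\to 0}\int_{\Omega^c}h_\delta(u_{\delta,\varepsilon})\geq|\{u_\varepsilon>0\}\setminus\Omega|$. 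Since $f_\varepsilon$ is continuous and non-decreasing, these inequalities survive composition with $f_\varepsilon$.

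Passing to the $\liminf$ in the displayed inequality, first as $\sigma\to 0$ for each fixed $\delta$ and then as $\delta\to 0$, we obtain
\begin{equation*}
J_\varepsilon(u_\varepsilon)=\frac1p\int|\nabla u_\varepsilon|^p+f_\varepsilon\big(|\{u_\varepsilon>0\}\setminus\Omega|\big)\leq\frac1p\int|\nabla v|^p+f_\varepsilon\big(|\{v>0\}\setminus\Omega|\big)=J_\varepsilon(v).
\end{equation*}
Finally, $u_\varepsilon\geq\varphi$, since each $u_{\delta,\varepsilon}\geq\varphi$ and the convergence is locally uniform, so $u_\varepsilon$ is admissible and the proof is complete. (The same argument in fact gives $J_\varepsilon(u_\varepsilon)\leq J_\varepsilon(v)$ for every $v\geq\varphi$ with $|\{v>0\}\setminus\Omega|<\infty$, but only local minimality will be used in the sequel.)

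The step I expect to be the main obstacle is the treatment of the penalisation term on the \emph{unbounded} complement $\Omega^c$: one must remove two regularisations at once --- replacing $h_\delta(\cdot)$ by $\chi_{\{\cdot>0\}}$ and deleting $g_\sigma$ --- while having only weak $W^{1,p}$ and local uniform convergence at hand, with no control on the decay of the $u_{\sigma,\delta,\varepsilon}$ at infinity. What makes it work, and should be emphasised, is that the approximation is one-sided precisely where needed: the bound $h_\delta(v)\leq\chi_{\{v>0\}}$ provides an integrable dominating function on the competitor side, giving a true limit there, whereas on the side of $u_\varepsilon$ only the lower bound $\liminf h_\delta(u_{\delta,\varepsilon})\geq\chi_{\{u_\varepsilon>0\}}$ is needed, which Fatou supplies for free; the monotonicity of $f_\varepsilon$ is what lets these one-sided estimates pass through the nonlinearity.
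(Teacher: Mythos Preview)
Your argument is correct and follows the same line as the paper's proof: one-sided control of the penalisation (Fatou/lower semicontinuity on the minimiser side, dominated convergence on the competitor side), with the monotonicity of $f_\varepsilon$ carrying the inequalities through. The only cosmetic differences are that the paper argues by contradiction and uses a level-set truncation $\{u_\varepsilon\geq\tau\}$ in place of your pointwise estimate $\liminf_{\delta\to 0}h_\delta(u_{\delta,\varepsilon})\geq\chi_{\{u_\varepsilon>0\}}$; these are equivalent devices, and your direct formulation is arguably cleaner.
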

\begin{proof}
  Suppose the assertion of the theorem fails. Then there is $\epsilon>0$ and $v\geq\varphi$, with $v-u_\varepsilon$ supported in a ball $B_r(x_0)$, such that
  \begin{eqnarray}\label{5.1}
    &&\int_{B_r(x_0)}\frac{|\nabla v|^p}{p}+f_\varepsilon\bigg(\int_{\Omega^c\cap B_r(x_0)}\chi_{\{v>0\}}\bigg)\nonumber \\
    &<&\int_{B_r(x_0)}\frac{|\nabla u_\varepsilon|^p}{p}+f_\varepsilon\bigg(\int_{\Omega^c\cap B_r(x_0)}\chi_{\{u_\varepsilon>0\}}\bigg)-\epsilon.
  \end{eqnarray}
  Since $h_\delta(v)\rightarrow\chi_{\{v>0\}}$ as $\delta\rightarrow0$, we have
  \begin{eqnarray}
    &&\int_{B_r(x_0)}\frac{|\nabla v|^p}{p}+f_\varepsilon\bigg(\int_{\Omega^c\cap B_r(x_0)}\chi_{\{v>0\}}\bigg)\nonumber\\
    &=&\lim_{\delta\rightarrow0}
    \int_{B_r(x_0)}\frac{|\nabla v|^p}{p}+f_\varepsilon\bigg(\int_{\Omega^c\cap B_r(x_0)}h_\delta(v)\bigg)\\
    &=&\lim_{\sigma,\delta\rightarrow0}\int_{B_r(x_0)}\frac{|\nabla v|^p}{p}+g_\sigma(v-\varphi)+f_\varepsilon\bigg(\int_{\Omega^c\cap B_r(x_0)}h_\delta(v)\bigg)\nonumber.
  \end{eqnarray}
  On the other hand, for a fixed small $\tau>0$, using Fatou's lemma and the fact that $h_\delta(u_\delta)=\chi_{\{u_\delta>0\}}$ on $\{u_\varepsilon>\tau\}$ for $\delta$ small enough, we get
  \begin{eqnarray}\label{5.3}
    &&\int_{B_r(x_0)}\frac{|\nabla u_\varepsilon|^p}{p}+f_\varepsilon\bigg(\int_{\Omega^c\cap B_r(x_0)}\chi_{\{u_\varepsilon>0\}}\bigg)-\epsilon\nonumber\\
    &\leq&\int_{B_r(x_0)}\frac{|\nabla u_\varepsilon|^p}{p}
    +f_\varepsilon\bigg(\int_{\Omega^c\cap B_r(x_0)\cap\{u_\varepsilon\geq\tau\}}\chi_{\{u_\varepsilon>0\}}\bigg)-\frac{\epsilon}{2}\nonumber\\
    &\leq&\liminf_{\delta\rightarrow0}\int_{B_r(x_0)}\frac{|\nabla u_{\delta,\varepsilon}|^p}{p}
    +f_\varepsilon\bigg(\int_{\Omega^c\cap B_r(x_0)\cap\{u_\varepsilon\geq\tau\}}\chi_{\{u_{\delta,\varepsilon}>0\}}\bigg)-\frac{\epsilon}{2}\nonumber\\
    &\leq&\liminf_{\delta\rightarrow0}\int_{B_r(x_0)}\frac{|\nabla u_{\delta,\varepsilon}|^p}{p}
    +f_\varepsilon\bigg(\int_{\Omega^c\cap B_r(x_0)\cap\{u_\varepsilon\geq\tau\}}h_\delta(u_{\delta,\varepsilon})\bigg)-\frac{\epsilon}{2}\nonumber\\
    &\leq&\liminf_{\sigma,\delta\rightarrow0}\int_{B_r(x_0)}\frac{|\nabla u_{\sigma,\delta,\varepsilon}|^p}{p}
    +f_\varepsilon\bigg(\int_{\Omega^c\cap B_r(x_0)\cap\{u_\varepsilon\geq\tau\}}h_\delta(u_{\sigma,\delta,\varepsilon})\bigg)-\frac{\epsilon}{2}\nonumber\\
    &\leq&\liminf_{\sigma,\delta\rightarrow0}\int_{B_r(x_0)}\frac{|\nabla u_{\sigma,\delta,\varepsilon}|^p}{p}+g_\sigma(u_{\sigma,\delta,\varepsilon}-\varphi)\nonumber\\
    &+&f_\varepsilon\bigg(\int_{\Omega^c\cap B_r(x_0)}h_\delta(u_{\sigma,\delta,\varepsilon})\bigg)-\frac{\epsilon}{2}.
  \end{eqnarray}
  Combining \eqref{5.1}-\eqref{5.3}, we get that
  $$
  J_{\sigma,\delta,\varepsilon}(v)<J_{\sigma,\delta,\varepsilon}(u_{\sigma,\delta,\varepsilon}) -\frac{\epsilon}{4},
  $$
  which contradicts the minimality of $u_{\sigma,\delta,\varepsilon}$ once $\sigma,\delta$ are small enough.
\end{proof}

The following proposition contains what can be interpreted as the Euler-Lagrange equation satisfied by $u_\varepsilon$ and is a direct consequence of \eqref{3.2}.
\begin{proposition}\label{c5.1}
 The function $u_\varepsilon$ satisfies 
 $$
\left\{
\begin{array}{rcl}
 \Delta_p u_\varepsilon \leq 0 & \mathrm{in} & \Omega,  \\
 \Delta_p u_\varepsilon=0 & \mathrm{in} & \Omega\cap\{u_\varepsilon>\varphi\},\\
 \Delta_p u_\varepsilon\geq0 & \mathrm{in} & \Omega^c,\\
 \Delta_p u_\varepsilon=0 & \mathrm{in} & \{u_\varepsilon>0\}\setminus\Omega. 
\end{array}
\right.
$$
It thus minimizes the $p$-Dirichlet energy over the set of functions in $W^{1,p}(\Omega)$ which lie above $\varphi$, i.e., $u_\varepsilon$ is a solution of the obstacle problem for the $p$-Laplacian in $\Omega$, with $\varphi$ acting as the obstacle and having $u_\varepsilon|_{\partial\Omega}$ as boundary data.
\end{proposition}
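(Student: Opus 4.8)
The plan is to read the four relations directly off the Euler--Lagrange equation \eqref{3.2} for the approximants $u_{\sigma,\delta,\varepsilon}$, and then pass to the limit as $\sigma,\delta\to0$, using the local uniform convergence $u_{\sigma,\delta,\varepsilon}\to u_\varepsilon$ from Corollary \ref{c4.2}. The reason to work at the level of \eqref{3.2} is that the signs of its right-hand side are transparent: $g_\sigma$ is decreasing and vanishes on $[0,+\infty)$, so $g_\sigma'\le0$ everywhere and $g_\sigma'(t)=0$ for $t\ge0$; $f_\varepsilon$ is increasing, so $f_\varepsilon'>0$; and $h_\delta$ is non-decreasing with $h_\delta'(t)=0$ for $t\notin(0,\delta)$. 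Moreover, since $\supp\varphi\subset\Omega$, on the open set $N:=\{\varphi=0\}^{\circ}$, which contains $\Omega^c$ together with a neighbourhood of $\partial\Omega$, we have $u_{\sigma,\delta,\varepsilon}-\varphi=u_{\sigma,\delta,\varepsilon}\ge0$ by Proposition \ref{p2.1}, so $g_\sigma'(u_{\sigma,\delta,\varepsilon}-\varphi)=0$ there. Substituting this into \eqref{3.2} I obtain, in the distributional (equivalently, potential-theoretic) sense: $\Delta_pu_{\sigma,\delta,\varepsilon}=g_\sigma'(u_{\sigma,\delta,\varepsilon}-\varphi)\le0$ in $\Omega$ (the $\chi_{\Omega^c}$--term drops); $\Delta_pu_{\sigma,\delta,\varepsilon}=f_\varepsilon'(\,\cdot\,)\,h_\delta'(u_{\sigma,\delta,\varepsilon})\chi_{\Omega^c}\ge0$ in $N$, hence in $\Omega^c$; $\Delta_pu_{\sigma,\delta,\varepsilon}=0$ in $\Omega\cap\{u_{\sigma,\delta,\varepsilon}>\varphi\}$ (both terms vanish); and $\Delta_pu_{\sigma,\delta,\varepsilon}=0$ in $N\cap\{u_{\sigma,\delta,\varepsilon}>\delta\}$, hence in $\{u_{\sigma,\delta,\varepsilon}>\delta\}\setminus\Omega$, since $h_\delta'$ vanishes above $\delta$.

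The heart of the matter is the passage to the limit, and here the obstacle is that $\Delta_p$ is nonlinear while we only control $u_{\sigma,\delta,\varepsilon}$ weakly in $W^{1,p}$ and locally uniformly, so one cannot just pass to the limit in the weak formulation. I would instead invoke the stability of $p$-super-/sub-/harmonic functions under local uniform convergence: if $v_k$ is $p$-superharmonic (resp.\ $p$-subharmonic, $p$-harmonic) in an open set $D$ and $v_k\to v$ locally uniformly, then so is $v$, as one sees by comparing, on balls $B\subset\subset D$, with the $p$-harmonic functions in $B$ having boundary values $v_k|_{\partial B}$, which converge uniformly in $B$ to the $p$-harmonic function with boundary values $v|_{\partial B}$ by the comparison principle. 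Applied with $D=\Omega$ this gives $\Delta_pu_\varepsilon\le0$ in $\Omega$, and with $D=N$ it gives $\Delta_pu_\varepsilon\ge0$ in $N$, in particular in $\Omega^c$. For the two equalities I would argue locally: if $\omega\subset\subset\Omega\cap\{u_\varepsilon>\varphi\}$ then $\inf_\omega(u_\varepsilon-\varphi)>0$, so by local uniform convergence $\omega\subset\{u_{\sigma,\delta,\varepsilon}>\varphi\}$ for $\sigma,\delta$ small, whence $u_{\sigma,\delta,\varepsilon}$ is $p$-harmonic in $\omega$ and therefore so is $u_\varepsilon$; exhausting $\Omega\cap\{u_\varepsilon>\varphi\}$ by such $\omega$ yields $\Delta_pu_\varepsilon=0$ there. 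Likewise, if $\omega\subset\subset N\cap\{u_\varepsilon>0\}$ then $\inf_\omega u_\varepsilon>0$, so $\omega\subset\{u_{\sigma,\delta,\varepsilon}>\delta\}$ for $\delta$ small, and the same reasoning gives $\Delta_pu_\varepsilon=0$ in $\{u_\varepsilon>0\}\setminus\Omega$ — using $N$ rather than $\Omega^c$ here also covers the points of $\partial\Omega$ where $u_\varepsilon>0$, since $\varphi$ vanishes near $\partial\Omega$. (Alternatively, one could derive the same relations from the local minimality of $u_\varepsilon$ obtained in Theorem \ref{t5.1}, testing $J_\varepsilon$ against one-sided and two-sided perturbations supported in small balls, together with a $p$-harmonic replacement argument in $\Omega^c$.)

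It remains to deduce the obstacle-problem characterization, which is standard. The first two relations say that $\mu:=-\Delta_pu_\varepsilon$ is a non-negative Radon measure on $\Omega$ supported in the coincidence set $\{u_\varepsilon=\varphi\}$, while $u_\varepsilon\ge\varphi$ in $\Omega$ and $u_\varepsilon$ has trace $u_\varepsilon|_{\partial\Omega}$. Hence, for every $v\in W^{1,p}(\Omega)$ with $v\ge\varphi$ and $v-u_\varepsilon\in W_0^{1,p}(\Omega)$,
$$
\int_\Omega|\nabla u_\varepsilon|^{p-2}\nabla u_\varepsilon\cdot\nabla(v-u_\varepsilon)\,dx=\int_\Omega (v-u_\varepsilon)\,d\mu=\int_\Omega (v-\varphi)\,d\mu\ge0,
$$
since $v-u_\varepsilon=v-\varphi$ on $\supp\mu$ and $v-\varphi\ge0$. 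This is exactly the variational inequality characterizing the unique solution of the obstacle problem for $\Delta_p$ in $\Omega$, with obstacle $\varphi$ and boundary datum $u_\varepsilon|_{\partial\Omega}$ (see \cite{PSU12}), so $u_\varepsilon$ is that solution and in particular minimizes $\int_\Omega|\nabla v|^p\,dx$ over all such $v$. I expect the only genuinely delicate point to be the passage to the limit in the nonlinear operator, which is why I would route it through the comparison principle rather than through the weak formulation; the sign bookkeeping in \eqref{3.2} and the obstacle-problem manipulations are routine.
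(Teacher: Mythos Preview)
Your proposal is correct and follows the paper's approach: the paper's entire proof is the one-line remark that the proposition ``is a direct consequence of \eqref{3.2}'', and you have carried out exactly that derivation --- reading the four sign relations off \eqref{3.2} for the approximants and then passing to the limit $\sigma,\delta\to0$. Your proof supplies substantially more detail than the paper, most notably the justification for the limit passage via stability of $p$-super/sub/harmonic functions under local uniform convergence (a point the paper leaves implicit) and the explicit derivation of the variational inequality for the obstacle problem; both additions are sound and welcome.
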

The regularity theory for the obstacle problem (see \cite{PSU12}) now yields the regularity of the interior free boundary.
\begin{theorem}\label{t5.2}
  If $\Delta_p\varphi$ is uniformly negative in $\{\varphi>0\}$, then the interior free boundary is smooth in the sense that $\mathcal{H}^{n-1}(\partial(\{u_\varepsilon>\varphi\}\cap\Omega))<\infty$, where $\mathcal{H}^{n-1}$ is the $(n-1)$-dimensional Hausdorff measure of the set.
\end{theorem}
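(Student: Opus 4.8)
The plan is to reduce the statement to the classical free boundary regularity theory for the obstacle problem, as collected in \cite{PSU12}, after extracting from the hypothesis $\Delta_p\varphi\le-c_0<0$ on $\{\varphi>0\}$ two facts: that the interior free boundary sits \emph{strictly inside} $\Omega$ (so a local $\mathcal{H}^{n-1}$ bound suffices), and that $\nabla\varphi$ cannot vanish on $\{\varphi>0\}$ (so the relevant operator is uniformly elliptic near the free boundary, putting us in the classical setting).

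\textit{Step 1 (localisation).} By Proposition \ref{c5.1}, $u_\varepsilon$ is a non-negative continuous $p$-superharmonic function in $\Omega$, and $u_\varepsilon\not\equiv0$ since $u_\varepsilon\ge\varphi\not\equiv0$; the strong minimum principle then gives $u_\varepsilon>0$ in the (connected) domain $\Omega$. Hence, if $x_0\in\Omega$ with $u_\varepsilon(x_0)=\varphi(x_0)$, necessarily $\varphi(x_0)>0$, i.e. the coincidence set $\Lambda:=\{u_\varepsilon=\varphi\}\cap\Omega$ is contained in $\{\varphi>0\}$. As $\varphi$ is compactly supported in $\Omega$, $\Lambda$ is a compact subset of the open set $\{\varphi>0\}$, and the interior free boundary is $\partial\Lambda$, a compact subset of $\{\varphi>0\}\Subset\Omega$ (the extra piece $\partial\Omega$, if it is counted as part of $\partial(\{u_\varepsilon>\varphi\}\cap\Omega)$, is a smooth compact hypersurface and contributes only a finite amount). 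It therefore suffices to prove a finiteness estimate for $\mathcal{H}^{n-1}$ that is local near $\partial\Lambda$ and then cover $\partial\Lambda$ by finitely many balls.

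\textit{Step 2 (uniform ellipticity and linearisation).} For $p>2$ the quantity $\Delta_p\varphi$ vanishes at every point where $\nabla\varphi=0$; thus the hypothesis forces $\nabla\varphi\ne0$ on $\{\varphi>0\}$ and, by compactness, $|\nabla\varphi|\ge\kappa_0>0$ on an open set $V$ with $\Lambda\subset V$ and $\overline V\Subset\{\varphi>0\}$. By the obstacle-problem regularity theory for $\Delta_p$ with a smooth obstacle, $u_\varepsilon\in C^{1,\alpha}_{\mathrm{loc}}$, and $\nabla u_\varepsilon=\nabla\varphi$ on $\Lambda$ (since $u_\varepsilon-\varphi\ge0$ attains its minimum there), so after shrinking $V$ to a thin neighbourhood of $\Lambda$ we have $|\nabla u_\varepsilon-\nabla\varphi|$ small and $|\nabla u_\varepsilon|\ge\kappa_0/2$ on $V$. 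Writing $v:=u_\varepsilon-\varphi\ge0$ and subtracting $\mathrm{div}(|\nabla\varphi|^{p-2}\nabla\varphi)=\Delta_p\varphi$ from $\mathrm{div}(|\nabla u_\varepsilon|^{p-2}\nabla u_\varepsilon)=0$ on $\{v>0\}$, the fundamental theorem of calculus along the segment joining $\nabla\varphi$ and $\nabla u_\varepsilon$ (which stays in the non-degenerate range by the choice of $V$) yields
$$
\mathrm{div}\big(A(x)\nabla v\big)=-\Delta_p\varphi\ge c_0>0\quad\text{in }\{v>0\}\cap V,
$$
with $A$ a uniformly elliptic matrix field with continuous entries, while $v=0$ and $\mathrm{div}(A\nabla v)=0$ on the interior of $\Lambda$. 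Thus $v$ solves, in $V$, a classical uniformly elliptic obstacle problem with right-hand side bounded between two positive constants.

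\textit{Step 3 (conclusion) and main obstacle.} For such a problem the free-boundary theory of \cite{PSU12} applies: $v\in C^{1,1}_{\mathrm{loc}}(V)$, it is non-degenerate at free boundary points ($\sup_{B_r(x_0)}v\ge c\,r^2$ for $x_0\in\partial\{v>0\}$), and $\partial\{v>0\}$ splits into a relatively open set of regular points — locally a $C^{1,\alpha}$ hypersurface — and a set of singular points contained in a countable union of $C^1$ hypersurfaces of dimension $n-1$; in either case $\mathcal{H}^{n-1}$ is locally finite. Covering the compact set $\partial\Lambda$ by finitely many such neighbourhoods then gives $\mathcal{H}^{n-1}(\partial(\{u_\varepsilon>\varphi\}\cap\Omega))<\infty$. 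The delicate point is precisely the justification of Steps 2--3 for the degenerate operator $\Delta_p$: the structure theorem quoted from \cite{PSU12} is stated for uniformly elliptic operators, and transferring it requires the linearisation above together with the upgrade of $u_\varepsilon$ from $C^{1,\alpha}$ to $C^{1,1}$ near $\Lambda$ and the quadratic non-degeneracy of $v$ — both of which rely essentially on $\Delta_p\varphi\le-c_0<0$ on $\{\varphi>0\}$, the first through the implication $\nabla\varphi\ne0$ there and the second through the strictly positive right-hand side in the equation for $v$.
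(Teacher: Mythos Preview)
The paper offers no proof beyond the single sentence preceding the statement: having identified $u_\varepsilon$ in Proposition~\ref{c5.1} as the solution of the $p$-Laplacian obstacle problem in $\Omega$ with obstacle $\varphi$, it simply invokes \cite{PSU12}. Your proposal is therefore an expansion of what the paper leaves implicit, and the route you take --- localise the contact set inside $\{\varphi>0\}$, linearise around the obstacle using $|\nabla\varphi|\ge\kappa_0>0$ to make the operator uniformly elliptic, then quote the classical obstacle-problem theory --- is a natural way to justify that citation, since \cite{PSU12} is written for non-degenerate operators.

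Two remarks. First, your Step~3 invokes the full structure theorem (regular/singular dichotomy) to conclude $\mathcal H^{n-1}$-finiteness, but the deduction as written is not clean: containment of the singular set in a \emph{countable} union of $C^1$ hypersurfaces does not by itself give locally finite $\mathcal H^{n-1}$. The finiteness is obtained in \cite{PSU12} \emph{before} the structure theorem, directly from $C^{1,1}$ regularity together with quadratic non-degeneracy via a covering/density argument; that is the step you should quote. Second --- and this is a wrinkle in the paper rather than in your argument --- your own observation that for $p>2$ the hypothesis $\Delta_p\varphi\le -c_0<0$ on $\{\varphi>0\}$ forces $\nabla\varphi\ne 0$ there actually renders the hypothesis vacuous in the regime of interest: a smooth non-negative compactly supported $\varphi$ attains an interior maximum in $\{\varphi>0\}$, where $\nabla\varphi=0$ and hence $\Delta_p\varphi=0$. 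The condition is inherited verbatim from the $p=2$ case in \cite{Y16} and does not survive the passage to $p>2$ as stated.
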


Observe that as long as we do not touch the interior contact set, all tools from the theory of $p$-harmonic functions with linear growth are available, and so we can use the corresponding result of \cite{BMW06} to obtain the following representation theorem, which, in particular, gives information on the regularity of the exterior free boundary.
\begin{theorem}\label{t5.3}
For any compact set $K$ we have
  \begin{itemize}
    \item[(1)]
    $$
    \mathcal{H}^{n-1}(K\cap\partial\{u_\varepsilon>0\})<\infty;
    $$
    \item[(2)] There exists a Borel measure $q_{u_\varepsilon}$ such that
        $$
        \Delta_p u_\varepsilon|_{(\Omega\cap\{u_\varepsilon=\varphi\})^c}=
        q_{u_\varepsilon}\mathcal{H}^{n-1}|_{\partial\{u_\varepsilon>0\}};
        $$
    \item[(3)] There exist positive constants $c$ and $C$, depending only on $n$, $\|\varphi\|_{C^1}$, $K$ and $\varepsilon$, such that, for $x_0\in\partial\{u_\varepsilon>0\}$ and $B_r(x_0)\subset(\Omega\cap\{u_\varepsilon=\varphi\})^c$, there holds
        $$
        c\leq q_{u_\varepsilon}(x_0)\leq C;
        $$
    \item[(4)]
    $$
    cr^{n-1}\leq\mathcal{H}^{n-1}(B_r(x_0)\cap\partial\{u_\varepsilon>0\})\leq Cr^{n-1};
    $$
    \item[(5)] For $\mathcal{H}^{n-1}$-almost every $x_0\in\{u_\varepsilon>0\}$, one has
        $$
        u_\varepsilon(x_0+x)=q_{u_\varepsilon}(x_0)\max\{-x\cdot\nu(x_0),0\}+o(|x|),
        $$
        where $\nu(x_0)$ is the outer normal vector to the reduced boundary of $\{u_\varepsilon>0\}$ at $x_0$;
    \item[(6)] $q_{u_\varepsilon}$ is constant $\mathcal{H}^{n-1}$-almost everywhere on $\partial\{u_\varepsilon>0\}$;
    \item[(7)] The exterior free boundary $\partial\{u_\varepsilon>0\}$ is smooth except on a $\mathcal{H}^{n-1}$-null set.
  \end{itemize}
\end{theorem}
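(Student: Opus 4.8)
The plan is to observe that, away from the interior contact set, the function $u_\varepsilon$ behaves like the solution of a one-phase Bernoulli free boundary problem for the $p$-Laplacian, and then to specialize to this situation the representation theory of \cite{BMW06}. First I would record the structural facts already available. By Proposition \ref{c5.1}, $u_\varepsilon$ is $p$-harmonic in $\{u_\varepsilon>0\}\cap(\Omega\cap\{u_\varepsilon=\varphi\})^c$; by Corollary \ref{c4.2} it is globally Lipschitz, with $|\nabla u_\varepsilon|\le C(n,p)(\|\varphi\|_{C^1}+\varepsilon^{-1})$; and, since $\varphi\not\equiv0$ is compactly supported in the domain $\Omega$ while $u_\varepsilon\ge\varphi$ is a nonnegative $p$-supersolution there, the strong minimum principle gives $u_\varepsilon>0$ in $\Omega$. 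Consequently $\partial\{u_\varepsilon>0\}\subset\overline{\Omega^c}$ stays a positive distance from $\supp\varphi$, and every $x_0\in\partial\{u_\varepsilon>0\}$ admits a small ball $B_r(x_0)$ with $\overline{B_r(x_0)}\subset(\Omega\cap\{u_\varepsilon=\varphi\})^c$ on which, in addition, $\varphi\equiv0$ (so the constraint $u\ge\varphi$ there reduces to $u\ge0$).

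Next I would carry out the classical Lagrange multiplier reduction on each such ball. For $v\ge0$ with $v-u_\varepsilon$ supported in $B_r(x_0)$, Theorem \ref{t5.1} gives
$$
\frac1p\int_{B_r(x_0)}|\nabla u_\varepsilon|^p+f_\varepsilon\big(|\{u_\varepsilon>0\}\setminus\Omega|\big)\le\frac1p\int_{B_r(x_0)}|\nabla v|^p+f_\varepsilon\big(|\{v>0\}\setminus\Omega|\big),
$$
and, since $f_\varepsilon$ is nondecreasing and piecewise linear with one-sided slopes between $\min\{\varepsilon,\varepsilon^{-1}\}$ and $\max\{\varepsilon,\varepsilon^{-1}\}$ (the corner at $\gamma$ being harmless, only the slope bounds being used), this shows that $u_\varepsilon$ is, locally in $B_r(x_0)$, a minimizer of the $p$-Laplacian Bernoulli functional $\frac1p\int|\nabla u|^p$ penalized by the volume of $\{u>0\}$ with penalization density pinched between $\varepsilon$ and $\varepsilon^{-1}$. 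This places $u_\varepsilon$, around every exterior free boundary point, in the class of functions treated in \cite{BMW06}; in particular its linear growth from $\partial\{u_\varepsilon>0\}$ (also immediate from the Lipschitz bound of Corollary \ref{c4.2}) and its non-degeneracy follow, via comparison with $p$-harmonic replacements and with $p$-harmonic barriers in annuli, exactly as in \cite{AC81, BMW06}.

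Then I would invoke that representation theory and patch the local conclusions along a finite cover of the compact set $K\cap\partial\{u_\varepsilon>0\}$ by such balls. This yields: the local finiteness of the $\mathcal{H}^{n-1}$-measure of the free boundary (1); the identification of $\Delta_p u_\varepsilon$, restricted to $(\Omega\cap\{u_\varepsilon=\varphi\})^c$, as the surface measure $q_{u_\varepsilon}\mathcal{H}^{n-1}|_{\partial\{u_\varepsilon>0\}}$ for a Borel density $q_{u_\varepsilon}$ (2); the two-sided bound $c\le q_{u_\varepsilon}\le C$, with $c,C$ depending on $n$, $\|\varphi\|_{C^1}$, $K$, and --- through the penalization slopes and the gradient bound --- on $\varepsilon$ (3); the Ahlfors regularity of $\partial\{u_\varepsilon>0\}$ (4); the half-plane blow-up of slope $q_{u_\varepsilon}(x_0)$ at $\mathcal{H}^{n-1}$-almost every point, that is, at reduced-boundary points (5); the constancy of $q_{u_\varepsilon}$ $\mathcal{H}^{n-1}$-almost everywhere (6); and the smoothness of the reduced boundary, which carries full $\mathcal{H}^{n-1}$-measure, so that the singular set is $\mathcal{H}^{n-1}$-null (7).

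I expect the only genuinely delicate point to be the handling of the interior contact set, which is precisely why every item is localized to balls contained in $(\Omega\cap\{u_\varepsilon=\varphi\})^c$: across $\Omega\cap\{u_\varepsilon=\varphi\}$ the function $u_\varepsilon$ equals the obstacle and is merely a $p$-supersolution, so neither the one-phase free boundary condition nor the non-degeneracy from below holds, and the machinery of \cite{BMW06} does not apply there. The substance of the argument is therefore the verification --- sketched in the first paragraph above --- that the exterior free boundary stays away from $\Omega\cap\{u_\varepsilon=\varphi\}$, so that the Lagrange reduction and the citation to \cite{BMW06} are legitimate on a neighbourhood of $\partial\{u_\varepsilon>0\}$; granting that, (1)--(7) are a direct transcription of the corresponding results in \cite{BMW06}.
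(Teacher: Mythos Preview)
Your proposal is correct and follows essentially the same approach as the paper. The paper gives no proof at all beyond the one-sentence observation preceding the statement, namely that away from the interior contact set the theory of $p$-harmonic functions with linear growth applies and one may cite \cite{BMW06} directly; your write-up supplies the missing justifications (positivity in $\Omega$ via the strong minimum principle, separation of $\partial\{u_\varepsilon>0\}$ from $\supp\varphi$, and the Lagrange-multiplier reduction from $J_\varepsilon$ to a local Bernoulli problem) that make that citation legitimate, but the strategy is identical.
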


\section{The limiting problem as $p\rightarrow\infty$}\label{S6}
In this section, we study the limit, as $p\rightarrow\infty$, of problem \eqref{P} and, for that purpose, we need to find bounds for its solutions which are independent of $p$.  Observe that, from Theorem \ref{t4.1}, we know that local minimizers of $J_\varepsilon(u)$ are Lipschitz continuous, i.e.,
$$
\|\nabla u_\varepsilon\|_\infty\leq C,
$$
where $C>0$ is a constant depending on $p$. The following lemma reveals the nature of that dependence. 
\begin{lemma}\label{l6.1}
There is a constant $L>0$, independent of $p$, such that
$$
\|\nabla u_\varepsilon\|_\infty\leq L\varepsilon^{-1/p}.
$$
In addition, for a constant $\theta>0$, independent of $p$,
  $$
  u_\varepsilon(x)>\varepsilon^{1/p} \, \theta \, \dist(x,\partial\{u_\varepsilon>0\})
  $$
  and  the strong non-degeneracy property holds, i.e.,
  $$
  \sup_{B_r(x_0)}u_\varepsilon\geq\varepsilon^{1/p}\, \theta \, r,
  $$
  where $x_0\in \partial\{u_\varepsilon>0\}$.
\end{lemma}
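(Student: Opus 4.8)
The plan is to extract the $\varepsilon^{1/p}$–scaling in two moves: first a rescaling of $u_\varepsilon$ that renders the volume penalization harmless, and then the competitor/blow–up arguments already behind Theorems~\ref{t4.1} and~\ref{t5.3}, now read with the parameters tracked. The unifying heuristic is that, across a ball $B_r$, a configuration of slope $\mu$ costs of order $\tfrac1p\mu^p r^n$ in $p$–Dirichlet energy, and the minimality of $u_\varepsilon$ for $J_\varepsilon$ (Theorem~\ref{t5.1}) forces this to be weighed against increments of $f_\varepsilon$, whose derivative lies in $[\varepsilon,\varepsilon^{-1}]$; balancing $\tfrac1p\mu^p$ against $\varepsilon$ (respectively $\varepsilon^{-1}$) isolates the factor $\varepsilon^{1/p}$ (respectively $\varepsilon^{-1/p}$).

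For the gradient bound I would first observe that, since $\{\lambda u_\varepsilon>0\}=\{u_\varepsilon>0\}$ for every $\lambda>0$, the rescaled function $W:=\varepsilon^{1/p}u_\varepsilon$ minimizes
$$
w\longmapsto \frac1p\int|\nabla w|^p+\widetilde f\bigl(|\{w>0\}\setminus\Omega|\bigr),\qquad \widetilde f:=\varepsilon f_\varepsilon,
$$
over the functions $w\ge\varepsilon^{1/p}\varphi$, and that $\widetilde f$ is increasing with $0\le\widetilde f\,'\le 1$. The proof of Theorem~\ref{t4.1} uses only that $f_\varepsilon$ is increasing with $0\le f_\varepsilon'\le\varepsilon^{-1}$, and the bound it produces is linear in $\varepsilon^{-1}$; the same argument, applied to the problem solved by $W$ (where the role of $\varepsilon^{-1}$ is played by $1$ and the obstacle $\varepsilon^{1/p}\varphi$ satisfies $\|\varepsilon^{1/p}\varphi\|_{C^{1,1}}\le\|\varphi\|_{C^{1,1}}$), gives $\|\nabla W\|_\infty\le C(n,p)\bigl(\|\varphi\|_{C^{1,1}}+1\bigr)$. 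Undoing the scaling, $\|\nabla u_\varepsilon\|_\infty=\varepsilon^{-1/p}\|\nabla W\|_\infty\le C(n,p)\bigl(\|\varphi\|_{C^{1,1}}+1\bigr)\varepsilon^{-1/p}$, which is the first assertion once $C(n,p)$ is seen to be bounded in $p$.

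For the two lower bounds I would run the dig–a–hole comparison behind the lower estimate in Theorem~\ref{t5.3}(3), keeping $\varepsilon$ explicit. Fix $x_0\in\partial\{u_\varepsilon>0\}$; since $u_\varepsilon>0$ throughout $\{\varphi>0\}\subset\Omega$ (Proposition~\ref{c5.1} and the strong minimum principle), necessarily $x_0\notin\supp\varphi$, so for small $r$ the constraint reduces to $w\ge 0$ on $B_r(x_0)$ and the competitor $v$ equal to $u_\varepsilon$ off $B_r(x_0)$ and to $\min\{u_\varepsilon,g\}$ on $B_r(x_0)$ is admissible, where $g$ is the $p$–capacitary potential of the annulus $B_r(x_0)\setminus\overline{B_{r/2}(x_0)}$ with boundary values $M:=\sup_{B_r(x_0)}u_\varepsilon$ and $0$. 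Then $v\equiv 0$ on $B_{r/2}(x_0)$ and $\{v>0\}\subseteq\{u_\varepsilon>0\}$, so the $p$–Dirichlet energy increases by at most $\tfrac1p\int_{B_r\setminus B_{r/2}}|\nabla g|^p\le\tfrac{C(n)}{p}(M/r)^p r^n$, while $f_\varepsilon$ decreases by at least $\varepsilon\,|\{u_\varepsilon>0\}\cap B_{r/2}(x_0)|$; minimality of $u_\varepsilon$ then forbids $M/r$ from being small, and, iterating this comparison in the standard way (cf. \cite{AC81, BMW06, Y16}), a dimensional $\theta$ — uniform in $p\ge 2$ since the threshold involves $(p/C(n))^{1/p}$, which is bounded away from $0$ — gives $\sup_{B_r(x_0)}u_\varepsilon\ge\theta\,\varepsilon^{1/p}r$. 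The first displayed inequality follows at once: if $x\in\{u_\varepsilon>0\}$ and $d=\dist(x,\partial\{u_\varepsilon>0\})$, then $u_\varepsilon$ is $p$–harmonic in $B_d(x)$, and combining the non-degeneracy just obtained at the nearest free boundary point with the Harnack inequality yields $u_\varepsilon(x)\ge\theta\,\varepsilon^{1/p}d$, after shrinking $\theta$ once more.

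I expect the main obstacle to be not the shape of these arguments — which is by now classical — but ensuring that every constant produced is independent of $p$. Concretely, one needs $p$–uniform versions of the estimates used along the way: the interior gradient estimate for functions with bounded $p$–Laplacian (for $C(n,p)$ above), the Harnack inequality and interior gradient estimate for $p$–harmonic functions, and the bound on the normal derivative of the capacitary profile of an annulus. Each is finite for fixed $p\ge 2$ by classical quasilinear elliptic theory, and stays bounded as $p\to\infty$ because the normalized families of solutions entering the argument are precompact — this is exactly the compactness that will be exploited in Section~\ref{S6} to pass to the $\infty$–Laplacian — so the supremum over $p\in[2,\infty)$ is finite. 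Combined with $\varepsilon^{\pm1/p}\to 1$, this makes the final estimates genuinely uniform in $p$, as needed in Theorems~\ref{t6.2} and~\ref{t6.3}.
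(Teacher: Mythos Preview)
Your non-degeneracy and linear-growth portion is essentially the paper's approach: the paper also invokes the classical hole-cutting comparison (referring to \cite{T10}), and your sketch of the capacitary competitor plus tracking of the $\varepsilon$-dependence is exactly that argument.

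For the gradient bound, however, you take a different route from the paper, and there is a loose end. You rescale $W=\varepsilon^{1/p}u_\varepsilon$ to normalize the penalty slope and then invoke ``the proof of Theorem~\ref{t4.1}.'' But Theorem~\ref{t4.1} is proved for $u_{\delta,\varepsilon}$, where a genuine PDE with bounded right-hand side is available (via the $h_\delta$-regularization), and the constant $C(n,p)$ there comes from DiBenedetto--Tolksdorf interior gradient estimates, whose $p$-uniformity is not something you can simply read off. Your last paragraph acknowledges this, but ``precompactness of normalized families'' is not a self-contained argument for the uniformity of \emph{that} constant. The paper bypasses this entirely: it argues directly on $u_\varepsilon$ via the Alt--Caffarelli $p$-harmonic replacement technique. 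One takes $h$ to be the $p$-harmonic replacement of $u_\varepsilon$ in a ball, compares $J_\varepsilon(u_\varepsilon)\le J_\varepsilon(h)$, and combines this with an integration along rays and an explicit annular barrier. The only $p$-dependent analytic input is the Harnack inequality for $p$-harmonic functions, which is known to hold with a constant depending only on dimension (see \cite{KMV96}); this is what makes the resulting $L$ genuinely independent of $p$. Your rescaling idea is elegant and would also work if you fed it into this Alt--Caffarelli argument (rather than into the elliptic-regularity machinery behind Theorem~\ref{t4.1}), since the replacement argument uses only minimality and the bound $\tilde f\,'\le 1$; but as written, the appeal to Theorem~\ref{t4.1} does not close the $p$-uniformity gap.
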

\begin{proof}
  From Theorem \ref{t5.1}, we know that $u_\varepsilon$ is a local minimizer of $J_\varepsilon$ over the set of functions which lie above $\varphi$. To prove the desired Lipschitz estimate for such minimizers, we follow the approach of \cite{AC81} (see also \cite{RT12}). We may assume that $p$ is large because we are interested in the limiting problem. Let $h$ be the $p$-harmonic replacement of $u_\varepsilon$ in the ball $B_r(x_0)$, i.e., $h$ is $p$-harmonic in the ball and agrees with $u_\varepsilon$ on the boundary of the ball. The minimizer $u_\varepsilon$ stays above $\varphi$, and thanks to the maximum principle, so does $h$. Hence, $h$ is an admissible function, and since $u_\varepsilon$ is a local minimizer of $J_\varepsilon$, then over $B_r(x_0)$ one has that $J_\varepsilon(u_\varepsilon)\leq J_\varepsilon(h)$. The latter implies
  \begin{equation}\label{6.1}
  c_0\int_{B_r(x_0)}|\nabla(u_\varepsilon-h)(x)|^p\,dx\leq\frac{1}{\varepsilon}|\{u_\varepsilon=0\}\cap B_r(x_0)|,
  \end{equation}
  where $c_0>0$ is a constant depending only on the dimension. 
  
  Next, for any direction $\nu$, define
  $$
  s_\nu:=\min \left\{ s\,|\,\frac{1}{4}\leq s\leq1;\,u_\varepsilon(x_0+rs\nu)=0 \right\}
  $$
  if such a set is nonempty; otherwise, assume $s_\nu=1$. Then
\begin{eqnarray}\label{6.2}
  h(x_0+rs_\nu\nu) &=& \int_{s_\nu}^{1}\frac{d}{ds}(u_\varepsilon-h)(x_0+rs\nu)\,ds \\
  &\leq& r(1-s_\nu)^{1/p'}\bigg[\int_{s_\nu}^{1}|\nabla(h-u_\varepsilon)(x_0+s\nu)|^p\,ds\bigg]^{1/p},\nonumber
\end{eqnarray}
where $\frac{1}{p}+\frac{1}{p'}=1$. On the other hand, by the Harnack inequality, one has that
$$
\inf_{B_{2r/3}(x_0)}h\geq c_1h(x_0),
$$
with a constant $c_1>0$ depending only on the dimension (see \cite{KMV96}). 

In order to proceed, we construct a barrier function $v$ such that
$$
\left\{
  \begin{array}{ll}
    \Delta_pv=0 & \hbox{in }\,B_1(0)\setminus B_{2/3}(0), \\
    v=0 & \hbox{on }\,\partial B_1(0), \\
    v=c_1 & \hbox{in }\,B_{2/3}(0),
  \end{array}
\right.
$$
with the same constant $c_1$ appearing in the Harnack inequality. By the Hopf maximum principle, there exists a constant $c_2>0$, depending only on dimension, such that
\begin{equation}\label{6.3}
  v(x)\geq c_2(1-|x|).
\end{equation}
Then the Harnack inequality, the maximum principle and \eqref{6.3} lead to
$$
h(x_0+rx)\geq h(x_0)v(x)\geq c_2h(x_0)(1-|x|),
$$
which, together with \eqref{6.2}, provides
$$
r^p\bigg[\int_{s_\nu}^{1}|\nabla(h-u_\varepsilon)(x_0+s\nu)|^p\,ds\bigg]\geq c_2^ph^p(x_0)(1-s_\nu).
$$
Integrating the latter with respect to $\nu$ over $\mathbb{S}^{n-1}$, we arrive at
$$
  \bigg(\frac{c_2h(x)}{r}\bigg)^p\int_{B_r(x)\setminus B_{r/4}(x)}\chi_{\{u_\varepsilon=0\}\cap B_r(x_0)}\,dx\leq c_3\int_{B_{r}(x)}|\nabla(h-u_\varepsilon)(x)|^p\,dx,
$$
with a constant $c_3>0$ independent of $p$. Therefore,
\begin{equation}\label{6.4}
   \bigg(\frac{c_2h(x)}{r}\bigg)^p|\{u_\varepsilon=0\}\cap B_r(x_0)|\leq c_3\int_{B_{r}(x)}|\nabla(h-u_\varepsilon)(x)|^p\,dx.
\end{equation}
Define $\rho:=\dist(x,\partial\{u_\varepsilon>0\})$. For each $\xi\in(0,1)$, let $h_\xi$ be the $p$-harmonic replacement of $u_\varepsilon$ in $B_{\rho+\xi}(x)$. Then \eqref{6.1} and \eqref{6.4}, together with standard elliptic estimates, lead  to
$$
u_\varepsilon(x)=h_\xi(x)+o(1)\leq L\frac{1}{\varepsilon^p}(\rho+\xi)+o(1),
$$
with a constant $L>0$ depending only on the dimension and $\gamma$. Letting $\xi\rightarrow0$ in the last inequality, we get
$$
u_\varepsilon(x)\leq L\varepsilon^{-1/p}\rho,
$$
which implies that $u_\varepsilon$ is Lipschitz continuous, with a Lipschitz constant not exceeding $L\varepsilon^{-1/p}$. The first part of the lemma is proved.

\medskip

The proof of the (strong) non-degeneracy result is classical in variational free boundary theory. It is based on cutting a small hole around the free boundary point and comparing the result with the original optimal configuration while keeping track of the precise constants which appear on the estimates. For the details, we refer the reader to Theorem 6.2 of \cite{T10}.
\end{proof}

As is usual in this type of problems (see, for example, \cite{AAC86, ACS87, AC81, BMW06, OT06, RT12, T10, TT15}) it turns out that, for $\varepsilon>0$ small enough, the function $u_\varepsilon$ from Corollary \ref{c4.2} reaches the desired volume, i.e., it becomes a solution of \eqref{P} carrying all the properties proved above. This means that we do not need to pass to the limit as $\varepsilon\rightarrow0$.

\begin{theorem}\label{t6.1}
  For $\varepsilon>0$ small enough, the function $u_\varepsilon$ solves the problem \eqref{P}.
\end{theorem}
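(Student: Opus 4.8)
The plan is to upgrade $u_\varepsilon$ — at this stage only a local minimizer of the one-parameter functional $J_\varepsilon$ — first to a global minimizer of $J_\varepsilon$ among competitors lying above $\varphi$, then to a configuration that saturates the volume constraint, $V_\varepsilon:=|\{u_\varepsilon>0\}\setminus\Omega|=\gamma$, and finally to a genuine minimizer of $J$ in $\mathbb{K}_p$; the structural (PDE) relations required in \eqref{P} are already provided by Proposition \ref{c5.1}.

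The global minimality of $J_\varepsilon$ comes almost for free from the construction. If $v\in W^{1,p}(\R^n)$ satisfies $v\ge\varphi$ and $|\{v>0\}\setminus\Omega|<\infty$, then $g_\sigma(v-\varphi)\equiv0$ and, by monotone convergence, $f_\varepsilon\big(\int_{\Omega^c}h_\delta(v)\big)\to f_\varepsilon(|\{v>0\}\setminus\Omega|)$, so $J_{\sigma,\delta,\varepsilon}(v)\to J_\varepsilon(v)$ as $\sigma,\delta\to0$. On the other hand $g_\sigma\ge0$, the $p$-Dirichlet energy is weakly lower semicontinuous, and since $u_{\sigma,\delta,\varepsilon}\to u_\varepsilon$ locally uniformly one has $h_\delta(u_{\sigma,\delta,\varepsilon})\to1$ wherever $u_\varepsilon>0$, so by Fatou's lemma $\liminf_{\sigma,\delta\to0}\int_{\Omega^c}h_\delta(u_{\sigma,\delta,\varepsilon})\ge V_\varepsilon$ and then, $f_\varepsilon$ being nondecreasing, $\liminf_{\sigma,\delta\to0}J_{\sigma,\delta,\varepsilon}(u_{\sigma,\delta,\varepsilon})\ge J_\varepsilon(u_\varepsilon)$. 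Passing to the limit in $J_{\sigma,\delta,\varepsilon}(u_{\sigma,\delta,\varepsilon})\le J_{\sigma,\delta,\varepsilon}(v)$ gives $J_\varepsilon(u_\varepsilon)\le J_\varepsilon(v)$.

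The heart of the proof is to show $V_\varepsilon=\gamma$ for $\varepsilon$ small. Comparing $u_\varepsilon$ with the solution $v$ of the obstacle problem for $\Delta_p$ in $\Omega$ used in Proposition \ref{p2.1} (for which $\{v>0\}\setminus\Omega=\emptyset$ and $\tfrac1p\int|\nabla v|^p=M$) gives $J_\varepsilon(u_\varepsilon)\le M-\varepsilon\gamma$, hence $\tfrac1p\int|\nabla u_\varepsilon|^p\le M$ and $V_\varepsilon\le\gamma+M\varepsilon$, so the positivity set overshoots by at most $O(\varepsilon)$. To exclude any overshoot, suppose $V_\varepsilon>\gamma$: then $f_\varepsilon$ is differentiable at $V_\varepsilon$ with $f_\varepsilon'(V_\varepsilon)=1/\varepsilon$, so the free boundary condition along the exterior free boundary — which lies in $\Omega^c$ and is ``clean'' there, $\varphi$ being supported in $\Omega$, so that Lemma \ref{l6.1} and Theorem \ref{t5.3} apply — forces the boundary flux $q_{u_\varepsilon}$ to be of order $\varepsilon^{-1/p}$; since $u_\varepsilon$ is $p$-harmonic in $\{u_\varepsilon>0\}\cap\Omega^c$ (a neighbourhood of $\partial\Omega$, as a bounded positivity component vanishing on its whole boundary would be trivial) and $0\le u_\varepsilon\le\|\varphi\|_\infty$ by \eqref{2.1}, the gradient estimate for $p$-harmonic functions forces that neighbourhood to have width $\lesssim\|\varphi\|_\infty\varepsilon^{1/p}$, whence $V_\varepsilon\lesssim\mathcal{H}^{n-1}(\partial\Omega)\,\varepsilon^{1/p}\to0$, contradicting $V_\varepsilon>\gamma$ for $\varepsilon$ small (alternatively, one may run the volume-reducing competitor argument of \cite{AAC86, BMW06, T10} near an exterior free boundary point, balancing the gain $\gtrsim r^n/\varepsilon$ against the $p$-Dirichlet cost). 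For the reverse inequality, if $V_\varepsilon<\gamma$, the global minimality forces $u_\varepsilon$ to minimize the $p$-Dirichlet energy among all $v\ge\varphi$ with $|\{v>0\}\setminus\Omega|\le V_\varepsilon$; but the value function $V\mapsto\min\{\tfrac1p\int|\nabla v|^p:\ v\ge\varphi,\ |\{v>0\}\setminus\Omega|\le V\}$ decreases on $[0,\gamma]$ at a rate bounded below by some $\lambda_0>0$ — pushing the exterior free boundary of an optimal configuration outward enlarges the admissible volume and lowers the energy proportionally, the proportionality constant being controlled below by the lower flux bound of Theorem \ref{t5.3}(3), and a minimizer of \eqref{P} must, by the same argument, saturate its volume — so comparing $u_\varepsilon$ with a minimizer of \eqref{P} and invoking global minimality once more forces $\lambda_0\le\varepsilon$, impossible for $\varepsilon<\lambda_0$. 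This quantitative balancing in the two volume estimates is the part I expect to be the main obstacle.

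Once $V_\varepsilon=\gamma$, the function $u_\varepsilon$ belongs to $\mathbb{K}_p$, and for every $w\in\mathbb{K}_p$ the inequality $|\{w>0\}\setminus\Omega|\le\gamma$ gives $f_\varepsilon(|\{w>0\}\setminus\Omega|)\le0=f_\varepsilon(V_\varepsilon)$, so
$$
J(u_\varepsilon)=J_\varepsilon(u_\varepsilon)-f_\varepsilon(V_\varepsilon)\le J_\varepsilon(w)-f_\varepsilon(|\{w>0\}\setminus\Omega|)=J(w).
$$
Hence $u_\varepsilon$ minimizes $J$ in $\mathbb{K}_p$ and, together with Proposition \ref{c5.1}, solves \eqref{P}.
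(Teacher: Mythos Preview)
Your approach is essentially the paper's: show that the $J_\varepsilon$--minimizer $u_\varepsilon$ satisfies $|\{u_\varepsilon>0\}\setminus\Omega|=\gamma$ for small $\varepsilon$, then combine with Proposition~\ref{c5.1} to land in $\mathbb{K}_p$ and minimize $J$. The paper's proof is far more laconic, however: it simply records that Theorem~\ref{t5.1} makes $u_\varepsilon$ a minimizer of $J_\varepsilon$ and then \emph{cites} \cite[Theorem~3.1]{BMW06} for the volume--saturation step $V_\varepsilon=\gamma$, without reproducing any of the quantitative balancing you sketch.

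Your expansion of that step is in the right spirit but not yet rigorous. In the overshoot case $V_\varepsilon>\gamma$, what pins down the width of $\{u_\varepsilon>0\}\setminus\Omega$ is not a ``gradient estimate for $p$-harmonic functions'' (those give upper, not lower, gradient bounds) but rather the non-degeneracy at rate $\sim\varepsilon^{-1/p}$ coming from the free boundary condition with $f_\varepsilon'=1/\varepsilon$; this is exactly the competitor/hole-cutting argument you mention parenthetically from \cite{AAC86,BMW06,T10}, and in practice one runs that argument directly rather than inferring a width bound. Likewise, in the undershoot case the monotone value-function picture is correct heuristically, but making the lower bound $\lambda_0>0$ uniform over $[0,\gamma]$ requires the same outward-perturbation construction, again as in \cite{BMW06}. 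Since the paper is content to cite that reference, your sketch is already more than it provides; just be aware that both halves ultimately rest on the same competitor machinery, which you have pointed to but not carried out.
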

\begin{proof}
  From Theorem \ref{t5.1} we know that $u_\varepsilon$ minimizes $J_\varepsilon$. This implies that when a fixed $\varepsilon>0$ is small enough, we have $|\{u_\varepsilon>0\}\setminus\Omega|=\gamma$ (Theorem 3.1 of \cite{BMW06}). Then $f_\varepsilon(|\{u_\varepsilon>0\}\setminus\Omega|)=0$ and, recalling Proposition \ref{c5.1}, one concludes that $u_\varepsilon\in\mathbb{K}_p$ solves \eqref{P}.
\end{proof}

From now on, $\varepsilon \ll 1$ is fixed and, to emphasize  the $p$ dependence, we denote with $u_p$ a solution of problem \eqref{P}.

\begin{corollary}\label{c6.1}
  If $u_p$ is a solution of \eqref{P}, then there is a constant $c>0$, independent of $p$, such that
  $$
  \|\nabla u_p\|_\infty\leq c.
  $$
  Moreover, $u_p$ grows linearly away from the free boundary, uniformly in $p$, i.e., there is a constant $\theta>0$, independent of $p$, such that
  $$
  u_p(x)>\theta \, \dist(x,\partial\{u_p>0\}),\,\,\forall x\in\{u_p>0\}.
  $$
  Finally, $u_p$ is strongly non-degenerate, that is, for any $x_0\in\partial\{u_p>0\}$, one has
  $$
  \sup_{B_r(x_0)}u_p\geq\theta \; r,
  $$
  with the constant $\theta>0$ being independent of $p$.
\end{corollary}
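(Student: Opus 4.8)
The plan is to read everything off Lemma \ref{l6.1} and Theorem \ref{t6.1}, the only real point being to absorb the $p$-dependent factors $\varepsilon^{\pm 1/p}$ into constants that do not see $p$. By Theorem \ref{t6.1}, for the value of $\varepsilon \ll 1$ that has been frozen, the function $u_\varepsilon$ of Corollary \ref{c4.2} solves \eqref{P}; thus we may take $u_p = u_\varepsilon$. In particular $\partial\{u_p>0\} = \partial\{u_\varepsilon>0\}$ and the three estimates of Lemma \ref{l6.1} apply verbatim to $u_p$: the Lipschitz bound $\|\nabla u_p\|_\infty \leq L\varepsilon^{-1/p}$, the linear growth $u_p(x) > \varepsilon^{1/p}\,\theta\,\dist(x,\partial\{u_p>0\})$, and the strong non-degeneracy $\sup_{B_r(x_0)} u_p \geq \varepsilon^{1/p}\,\theta\,r$, with $L$ and $\theta$ already independent of $p$.

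Since we care only about the asymptotics $p\to\infty$, we may restrict attention to $p\geq p_0$ for some fixed $p_0$. Because $0<\varepsilon<1$ is now a fixed number, the map $p\mapsto \varepsilon^{-1/p}$ is decreasing with limit $1$, so $1\leq \varepsilon^{-1/p}\leq \varepsilon^{-1/p_0}=:C_1$, while $p\mapsto \varepsilon^{1/p}$ is increasing with limit $1$, so $\varepsilon^{1/p}\geq \varepsilon^{1/p_0}=:c_1>0$; both $C_1$ and $c_1$ depend only on $\varepsilon$ and $p_0$, hence not on $p$.

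It then suffices to insert these bounds into Lemma \ref{l6.1}: the gradient bound becomes $\|\nabla u_p\|_\infty \leq L\varepsilon^{-1/p}\leq LC_1=:c$; the linear growth becomes $u_p(x) > \varepsilon^{1/p}\,\theta\,\dist(x,\partial\{u_p>0\})\geq c_1\theta\,\dist(x,\partial\{u_p>0\})$; and the non-degeneracy becomes $\sup_{B_r(x_0)} u_p \geq \varepsilon^{1/p}\,\theta\,r \geq c_1\theta\,r$. Relabelling $c_1\theta$ as $\theta$ gives the statement. No step here is delicate: all the analytic content sits in Lemma \ref{l6.1}, and the corollary is essentially the observation that, once $\varepsilon$ has been frozen at a value small enough for Theorem \ref{t6.1} to apply, the factors $\varepsilon^{\pm 1/p}$ become harmless in the limit $p\to\infty$ — the only thing to keep in mind being that the smallness threshold for $\varepsilon$ in Theorem \ref{t6.1} is fixed, and in particular chosen before $p$.
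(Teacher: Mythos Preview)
Your argument is correct and is exactly the approach the paper takes: its proof reads in full ``This is a direct consequence of Theorem~\ref{t6.1} combined with Lemma~\ref{l6.1}.'' You have simply spelled out the elementary bookkeeping with the factors $\varepsilon^{\pm 1/p}$ that the paper leaves implicit.
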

\begin{proof}
  This is a direct consequence of Theorem \ref{t6.1} combined with Lemma \ref{l6.1}.
\end{proof}
The next result is from \cite{Y16} and reveals, as one may expect, that the positivity set is well localized inside a bounded set. This means that the optimization in $\R^n$ is actually in a large but bounded domain (Theorem 6.4 of \cite{Y16}).
\begin{lemma}\label{l6.2}
  If $u_p$ is a minimizer of \eqref{P}, then
  $$
  diam(\{u_p>0\})\leq diam(\Omega)+1+C(n)\gamma\frac{\|\varphi\|_{C^1}+1/\varepsilon}{\varepsilon}.
  $$
\end{lemma}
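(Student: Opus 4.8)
The plan is to bound the diameter of $\{u_p > 0\}$ by showing that the portion of the positivity set lying far from $\Omega$ cannot be large in measure, and then using the uniform linear growth from Corollary~\ref{c6.1} to convert this measure bound into a diameter bound. First I would set $R_0 := \diam(\Omega) + 1$ and consider a point $x_0 \in \{u_p > 0\}$ with $\dist(x_0, \Omega) = d$ large; I want to show $d$ is controlled by the quantity on the right-hand side. Since $u_p(x_0) > 0$ and $u_p$ is $p$-harmonic in $\{u_p > 0\} \setminus \Omega$ (by Proposition~\ref{c5.1}), the non-degeneracy statement in Corollary~\ref{c6.1} gives $\sup_{B_r(y)} u_p \geq \theta r$ for $y \in \partial\{u_p > 0\}$, $r > 0$, with $\theta$ independent of $p$.

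The key step is a covering/iteration argument in the exterior region $\Omega^c$. The idea: if the positivity set reaches distance $d$ from $\Omega$, then by connectedness of the relevant super-level considerations (or by a chain of balls argument as in \cite{AC81, Y16}) one can find a controlled number $\sim d$ of disjoint balls of a fixed radius, each centered near $\partial\{u_p > 0\}$ and each lying in $\Omega^c$, on which $u_p$ is positive; non-degeneracy forces each such ball to contribute a fixed amount to $|\{u_p > 0\} \cap \Omega^c|$. Since this measure is bounded by $\gamma$ (as $u_p \in \mathbb{K}_p$), we get a bound on the number of balls, hence on $d$. The sharp form of the constant $C(n)\gamma(\|\varphi\|_{C^1} + 1/\varepsilon)/\varepsilon$ comes from the fact that by Lemma~\ref{l6.1} the optimal radius in the non-degeneracy balls scales like $u_p$ values divided by the Lipschitz constant $\sim \varepsilon^{-1/p}$, and the linear-growth constant $\theta \sim \varepsilon^{1/p}$; combining these with the uniform gradient bound $\|\nabla u_p\|_\infty \leq C(\|\varphi\|_{C^1} + 1/\varepsilon)$ from Corollary~\ref{c4.2} produces exactly that algebraic combination. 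I would follow the argument of Theorem 6.4 in \cite{Y16} closely, merely checking that all constants used ($\theta$, the Lipschitz bound, the measure bound $\gamma$) are the $p$-uniform ones established here.

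The main obstacle I expect is ensuring the chain-of-balls argument is geometrically valid in the exterior region: one needs that a point $x_0$ at distance $d$ from $\Omega$ with $u_p(x_0) > 0$ really does force many disjoint positivity balls inside $\Omega^c$, which requires either connectedness of $\{u_p > 0\}$ (not a priori obvious) or a more careful argument tracing the value of $u_p$ from $x_0$ back toward $\Omega$ along a path where $u_p$ stays positive, using the linear growth estimate $u_p(x) > \theta\,\dist(x, \partial\{u_p > 0\})$ to guarantee the positivity ball at each stage has radius comparable to $\dist(x, \partial\{u_p > 0\})$. Handling the interaction with $\partial\Omega$ (where $u_p$ need not be $p$-harmonic) is the delicate point, but since we only track behavior in $\Omega^c$ and the obstacle $\varphi$ is supported in $\Omega$, the equation $\Delta_p u_p = 0$ in $\{u_p > 0\} \setminus \Omega$ together with Harnack suffices. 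Everything else is a routine estimate that mirrors \cite{Y16}.
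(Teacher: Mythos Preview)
The paper does not actually give a proof of this lemma: it simply states that the result is Theorem~6.4 of \cite{Y16} and moves on. Your proposal is therefore aligned with the paper's treatment --- you too invoke \cite{Y16} and sketch the covering/chain-of-balls argument that underlies that reference --- so there is nothing to compare at the level of method.
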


We are now ready to pass to the limit in \eqref{P} as $p\rightarrow\infty$.
\begin{theorem}\label{t6.2}
  If $u_p$ is a solution of \eqref{P} then, as $p\rightarrow\infty$ and up to a subsequence, $u_p\rightarrow u_\infty$ uniformly in $\R^n$ and weakly in every $W^{1,q}_0(\R^n)$, $q\in(1,\infty)$, where $u_\infty$ is a minimizer of \eqref{L}. In addition, $u_\infty$ grows linearly away from the free boundary, and is strongly non-degenerate, i.e.,
  $$
  u_\infty(x)\geq\theta \, \dist(x,\partial\{u_\infty>0\}),\,\,
  \forall x\in\{u_\infty>0\},
  $$
  and, for any $x_0\in\partial\{u_\infty>0\}$, one has
  $$
  \sup_{B_r(x_0)}u_\infty\geq\theta\,  r.
  $$
\end{theorem}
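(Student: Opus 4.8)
The plan is to extract a candidate limit from the uniform estimates of Corollary~\ref{c6.1} and Lemma~\ref{l6.2}, then verify that this limit has the three advertised properties: it minimizes $\Lip(\cdot)$ in $\mathbb{K}_\infty$, it satisfies the correct viscosity relations, and it inherits linear growth and non-degeneracy. First I would note that by Corollary~\ref{c6.1} the family $\{u_p\}$ is uniformly Lipschitz (constant $c$ independent of $p$) and by \eqref{2.1} uniformly bounded, so by Arzel\`a--Ascoli a subsequence converges locally uniformly to some $u_\infty$; Lemma~\ref{l6.2} confines all the supports in a fixed ball, so the convergence is uniform on $\R^n$ and $u_\infty\in W^{1,\infty}(\R^n)$ with $\Lip(u_\infty)\le c$. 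Since $u_p\ge\varphi$ for all $p$, passing to the limit gives $u_\infty\ge\varphi$; weak-$W^{1,q}$ convergence (for each fixed $q$, using that the uniform Lipschitz bound dominates in every $L^q$ on the fixed ball) follows by a diagonal argument. The volume constraint $|\{u_\infty>0\}\setminus\Omega|\le\gamma$ is obtained from $|\{u_p>0\}\setminus\Omega|=\gamma$ (Theorem~\ref{t6.1}) together with the non-degeneracy/linear-growth bound, which prevents the positivity sets from collapsing: uniform linear growth forces $\{u_p>0\}$ to converge (in measure, say) to $\{u_\infty>0\}$, so the Lebesgue measure passes to the limit and $u_\infty\in\mathbb{K}_\infty$.

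Next I would establish that $u_\infty$ is actually a \emph{minimizer} of $\Lip$ in $\mathbb{K}_\infty$. The standard device is: take any competitor $w\in\mathbb{K}_\infty$; one wants to build, for each $p$, an admissible competitor $w_p\in\mathbb{K}_p$ with $J(w_p)^{1/p}\to\Lip(w)$, so that
\begin{equation*}
\Lip(u_\infty)\le\liminf_{p\to\infty}\left(\int|\nabla u_p|^p\right)^{1/p}\le\liminf_{p\to\infty}\left(p\,J(u_p)\right)^{1/p}\le\liminf_{p\to\infty}\left(p\,J(w_p)\right)^{1/p}\le\Lip(w).
\end{equation*}
The first inequality is the weak lower semicontinuity of the $L^q$-norm of the gradient combined with letting $q\to\infty$ (a standard fact: $\|\nabla u_\infty\|_{L^\infty(B)}\le\liminf_p\|\nabla u_p\|_{L^p(B)}$ when $\mathrm{supp}$ is in a fixed ball $B$); for the last inequality one typically just takes $w_p=w$ itself (it already lies in $\mathbb{K}_p$ after truncation to the fixed large ball, and $\|\nabla w\|_{L^p(B)}\to\|\nabla w\|_{L^\infty}=\Lip(w)$ on the bounded set $B$). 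A mild subtlety is that $J(u_p)$ is the energy of a \emph{minimizer of} \eqref{P}, not of the free competitor; but Proposition~\ref{c5.1} says $u_p$ solves the obstacle problem, hence minimizes $\frac1p\int|\nabla\cdot|^p$ among functions $\ge\varphi$ with its own boundary data, and combined with Theorem~\ref{t6.1} (so $f_\varepsilon$ contributes nothing) this gives $J(u_p)\le J(w_p)$ for admissible $w_p$. I would spell this comparison out carefully.

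Then I would check the viscosity relations defining \eqref{L}. Here the classical fact (Bhattacharya--DiBenedetto--Manfredi, Juutinen, etc.) is that $p$-harmonic functions converge locally uniformly to $\infty$-harmonic functions in the viscosity sense, and more generally distributional sub/supersolutions of $\Delta_p$ with the right sign pass to viscosity sub/supersolutions of $\Delta_\infty$; I would invoke this on $\{u_\infty>0\}\setminus\Omega$ (where $\Delta_p u_p=0$ by Proposition~\ref{c5.1}, using that $\{u_p>0\}\setminus\Omega$ converges to $\{u_\infty>0\}\setminus\Omega$ by non-degeneracy) to get $\Delta_\infty u_\infty=0$ there, and on $\Omega$ (where $\Delta_p u_p\le0$) to get $\Delta_\infty u_\infty\le0$. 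The linear growth and strong non-degeneracy of $u_\infty$ are immediate by passing to the limit in the corresponding inequalities of Corollary~\ref{c6.1}: the constant $\theta$ there is independent of $p$, and both $x\mapsto\dist(x,\partial\{u_p>0\})$ and $\sup_{B_r(x_0)}u_p$ behave continuously enough under uniform convergence (again using non-degeneracy to ensure $\partial\{u_p>0\}\to\partial\{u_\infty>0\}$ in the Hausdorff sense on compacts, which is essentially Theorem~\ref{t6.3}), so the strict inequality $u_p>\theta\dist$ becomes $u_\infty\ge\theta\dist$ in the limit. The main obstacle, in my view, is the measure-theoretic bookkeeping around the positivity sets: showing that $|\{u_\infty>0\}\setminus\Omega|\le\gamma$ (so $u_\infty$ is admissible) and that the region where $p$-harmonicity holds genuinely converges both hinge on the uniform non-degeneracy preventing mass from being lost in the limit, and making that rigorous — rather than the soft compactness or the known $p\to\infty$ PDE convergence — is the delicate point.
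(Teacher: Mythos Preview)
Your overall architecture matches the paper's, but you misplace the difficulty in two spots and introduce a circular dependence.

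\medskip

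\textbf{The volume constraint is easier than you think.} You flag $|\{u_\infty>0\}\setminus\Omega|\le\gamma$ as ``the delicate point'' and propose to use non-degeneracy to force convergence of the positivity sets in measure. This is unnecessary. The paper argues in one line from uniform convergence alone: for any fixed $\tau>0$ and $p$ large, $\{u_\infty>\tau\}\subset\{u_p>0\}$, hence $|\{u_\infty>\tau\}\setminus\Omega|\le|\{u_p>0\}\setminus\Omega|\le\gamma$; letting $\tau\to0$ gives the result. Only the \emph{upper} bound on the volume is needed for admissibility, and that requires no information about how $\{u_p>0\}$ behaves from below.

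\medskip

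\textbf{The appeal to Theorem~\ref{t6.3} is circular.} To pass the linear-growth and non-degeneracy estimates to the limit you invoke Hausdorff convergence of the free boundaries, i.e.\ Theorem~\ref{t6.3}. But the proof of Theorem~\ref{t6.3} explicitly uses the linear growth and non-degeneracy of $u_\infty$ established in Theorem~\ref{t6.2}, so you cannot quote it here. The paper simply passes to the limit in the inequalities of Corollary~\ref{c6.1} (together with $\varepsilon^{1/p}\to1$). Concretely, for linear growth: if $x\in\{u_\infty>0\}$ and $d=\dist(x,\partial\{u_\infty>0\})$, then for any $\eta>0$ uniform convergence gives $B_{d-\eta}(x)\subset\{u_p>0\}$ for large $p$, so $\dist(x,\partial\{u_p>0\})\ge d-\eta$ and $u_p(x)>\theta(d-\eta)$; let $p\to\infty$ then $\eta\to0$. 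Non-degeneracy is handled similarly by approximating $x_0\in\partial\{u_\infty>0\}$ from inside $\{u_\infty>0\}$. No Hausdorff convergence is needed.

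\medskip

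\textbf{Minimality.} Your chain of inequalities is exactly the paper's argument, but the ``mild subtlety'' you raise is not one: by definition a solution $u_p$ of \eqref{P} minimizes $J$ over $\mathbb{K}_p$, and since $\mathbb{K}_\infty\subset\mathbb{K}_p$ the comparison $J(u_p)\le J(w)$ for $w\in\mathbb{K}_\infty$ is immediate. The detour through Proposition~\ref{c5.1} and Theorem~\ref{t6.1} is not needed for this step.
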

\begin{proof}
  Let $v\in\mathbb{K_\infty}$. Since $\mathbb{K}_\infty \subset \mathbb{K}_p$, for every $p>1$, and $u_p$ is a minimizer of \eqref{P}, then
  \begin{eqnarray}\label{6.5}
    \bigg(\int|\nabla u_p|^p\bigg)^{1/p}&\leq&\bigg(\int|\nabla v|^p\bigg)^{1/p}=\bigg(\int_{\Omega}|\nabla v|^p+\int_{\{v>0\}\setminus\Omega}|\nabla v|^p\bigg)^{1/p}\nonumber \\
    &\leq& \Lip(v)(|\Omega|+\gamma)^{1/p}\nonumber\\
    &\leq& C,
  \end{eqnarray}
  where $C>0$ is a constant independent of $p$. We now fix $q\in(1,\infty)$; dividing the integral, estimating each term and using \eqref{6.5}, we get
  \begin{eqnarray}\label{6.6}
    \bigg(\int|\nabla u_p|^q\bigg)^{1/q}  &=& \bigg(\int_\Omega |\nabla u_p|^q+\int_{\{u_p>0\}\setminus\Omega}|\nabla u_p|^q\bigg)^{1/q}\nonumber \\
     &\leq& \bigg(\bigg[\int_{\Omega}|\nabla u_p|^p\bigg]^{q/p}\big[|\Omega|^{p/(p-q)}+\gamma^{p/(p-q)}\big]\bigg)^{1/q}\nonumber \\
     &\leq& \Lip(v)(|\Omega|+\gamma)^{1/p}\bigg(|\Omega|^{p/(p-q)}+\gamma^{p/(p-q)}\bigg)^{1/q}\nonumber
     \\
     &\leq& C.
  \end{eqnarray}
 Hence, $u_p$ is uniformly bounded in $W^{1,q}(\R^n)$. Therefore, there is a weakly convergent subsequence and its limit $u_\infty$ satisfies (thanks to \eqref{6.6})
 \begin{equation*}
  \bigg(\int|\nabla u_\infty|^q\bigg)^{1/q}\leq\Lip(v)(|\Omega|+\gamma)^{1/q}\leq C.
 \end{equation*}
 Now taking $q\rightarrow\infty$ and performing a diagonal argument, one gets a subsequence (still denoted by $u_p$), which for every $q\in(1,\infty)$ converges weakly in $W^{1,q}(\R^n)$ to the function $u_\infty\in W^{1,\infty}(\R^n)$, and
 $$
\Lip (u_\infty) = \|\nabla u_\infty\|_{L^\infty(\R^n)}\leq\Lip(v).
 $$
 Clearly $u_\infty\geq\varphi$. As for estimating the Lebesgue measure of the positivity set, note that, for a fixed $\varepsilon>0$ and $p$ large enough, using the uniform convergence, one has
 $$
 \{u_\infty>\varepsilon\}\subset\{u_p>0\}.
 $$
 Therefore,
 $$
 |\{u_\infty>0\}\setminus\Omega|=
 \lim_{\varepsilon\rightarrow0}
 |\{u_\infty>\varepsilon\}\setminus\Omega| \leq \gamma.
 $$
 Consequently, $u_\infty$ is a minimizer for \eqref{L}. 
 
 It remains to check that the limit satisfies $\Delta_\infty u_\infty=0$ in $\{u_\infty>0\}\setminus\Omega$ and $\Delta_\infty u_\infty\leq0$ in $\Omega$, in the viscosity sense. Let $\phi\in C^2(\R^n)$ be such that $u_\infty-\phi$ has a local minimum at $x_0\in\R^n$ and $u_\infty(x_0)=\phi(x_0)$. Since $u_{p}\rightarrow u_\infty$ uniformly, then $u_{p}-\phi$ has a minimum at some point $x_p\in\R^n$ and $x_p\rightarrow x_0$. On the other hand, $\Delta_{p}u_{p}\leq0$ weakly, therefore (see \cite{MRU09} and \cite{RT12})
 \begin{equation}\label{6.7}
 (p-2)|\nabla\phi|^{p-4}\Delta_\infty\phi(x_p)+
 |\nabla\phi|^{p-2}\Delta\phi(x_p)\leq0.
 \end{equation}
 If $\nabla\phi(x_0)=0$ then $\Delta_\infty\phi(x_0)=0$. If $\nabla\phi(x_0)\neq0$, then $\nabla\phi(x_p)\neq0$ for large $p$, hence \eqref{6.7} implies
 $$
 \Delta_\infty\phi(x_p)\leq-
 \frac{1}{p-2}|\nabla\phi|^2\Delta\phi(x_p)
 \rightarrow0\,\textrm{ as }\,p\rightarrow\infty.
 $$
 That is, $\Delta_\infty\phi(x_0)\leq0$, which means that $\Delta_\infty u_\infty\leq0$ in the viscosity sense. When $x_0\in\{u_\infty>0\}\setminus\Omega$, then for large $p$ one has that $x_p$ is in the same positivity set, and moreover, we have $u_{p}>0$ in a neighborhood of $x_0$ and every $u_{p}$ is $p$-harmonic there, therefore (once again, see \cite{MRU09} and \cite{RT12}) one has the equality in \eqref{6.7}, which implies that $\Delta\phi(x_0)=0$. This means that $\Delta_\infty u_\infty=0$ in $\{u_\infty>0\}\setminus\Omega$ in the viscosity sense.

 The linear growth and strong non-degeneracy properties of $u_\infty$ follow from the fact that  $\displaystyle\lim p^{1/p}=1$. Passing to the limit as $p\rightarrow\infty$ in the corresponding inequalities in Lemma \ref{l6.1}, we get the desired results.
\end{proof}
The next theorem gives information about the free boundaries of the limiting problem.
\begin{theorem}\label{t6.3}
  If $u_p$ is a solution of \eqref{P} and $u_\infty$ is a solution of \eqref{L} then, as $p\rightarrow\infty$ and up to a subsequence,
  $$
  \partial\{u_p>0\}\longrightarrow\partial\{u_\infty>0\}
  $$
  and
  $$
  \partial \left( \{u_p>\varphi\} \cap \Omega \right) \longrightarrow\partial \left( \{u_\infty>\varphi\} \cap \Omega \right)  $$
  in Hausdorff distance.
\end{theorem}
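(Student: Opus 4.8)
The plan is to prove each of the two Hausdorff convergences by establishing the two one-sided inclusions: given $\eta>0$, for all large $p$ every point of the $p$-free boundary lies within distance $\eta$ of the corresponding limiting free boundary, and vice versa. By Lemma \ref{l6.2}, all the positivity sets $\{u_p>0\}$ --- and hence all the free boundaries in play, including $\partial\{u_\infty>0\}$ via Theorem \ref{t6.2} --- sit inside one fixed compact set, so any sequence of free boundary points has a convergent subsequence and each inclusion can be argued by contradiction. I will carry this out for the exterior free boundary $\partial\{u_p>0\}\to\partial\{u_\infty>0\}$; the interior one goes the same way.

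\emph{Upper semicontinuity.} Suppose $\partial\{u_p>0\}\not\subset\mathcal N_\eta(\partial\{u_\infty>0\})$ along $p=p_k\to\infty$; choose $x_k\in\partial\{u_{p_k}>0\}$ with $\dist(x_k,\partial\{u_\infty>0\})\ge\eta$ and, by Lemma \ref{l6.2}, assume $x_k\to x_0$. Since $\{u_{p_k}>0\}$ is open and $u_{p_k}\ge\varphi\ge0$, we have $u_{p_k}(x_k)=0$; uniform convergence (Theorem \ref{t6.2}) and continuity of $u_\infty$ then give $u_\infty(x_0)=0$, i.e. $x_0\notin\{u_\infty>0\}$. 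On the other hand, the uniform strong non-degeneracy of Corollary \ref{c6.1} gives $\sup_{B_r(x_k)}u_{p_k}\ge\theta r$ for every $r>0$, and letting $k\to\infty$ yields $\sup_{B_r(x_0)}u_\infty>0$ for every $r>0$, so $x_0\in\overline{\{u_\infty>0\}}$. Hence $x_0\in\partial\{u_\infty>0\}$, contradicting $\dist(x_0,\partial\{u_\infty>0\})\ge\eta$.

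\emph{Lower semicontinuity.} Suppose $\partial\{u_\infty>0\}\not\subset\mathcal N_\eta(\partial\{u_p>0\})$ along $p=p_k\to\infty$; choose $x_k\in\partial\{u_\infty>0\}$ with $\dist(x_k,\partial\{u_{p_k}>0\})\ge\eta$ and assume $x_k\to x_0\in\partial\{u_\infty>0\}$, so $B_{\eta/2}(x_0)\cap\partial\{u_{p_k}>0\}=\emptyset$ for $k$ large. Being connected and disjoint from the free boundary, $B_{\eta/2}(x_0)$ lies entirely in $\{u_{p_k}>0\}$ or entirely in $\{u_{p_k}=0\}$. In the first case the uniform linear growth of Corollary \ref{c6.1} gives $u_{p_k}\ge\theta\eta/4$ on $B_{\eta/4}(x_0)$, so $u_\infty\ge\theta\eta/4>0$ there, contradicting $u_\infty(x_0)=0$. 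In the second case uniform convergence forces $u_\infty\equiv0$ on $B_{\eta/2}(x_0)$, again contradicting $x_0\in\partial\{u_\infty>0\}$. For the interior free boundary one runs the same scheme with $u_p-\varphi$ and the contact set $\{u_p=\varphi\}\cap\Omega$ replacing $u_p$ and $\{u_p=0\}$; near $\partial\Omega$, where $\varphi\equiv0$, this reduces to the exterior case, and the only extra inputs needed are a non-degeneracy and a linear-growth estimate for $u_p-\varphi$ near the contact set that are uniform in $p$ --- these follow from the obstacle-problem structure of $u_p$ (Proposition \ref{c5.1}) and the hypothesis that $\Delta_p\varphi$ is uniformly negative (as in Theorem \ref{t5.2}), the $p$-uniformity being guaranteed, as in Theorem \ref{t6.2}, by $p^{1/p}\to1$.

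The hard part is not the qualitative limit $u_p\to u_\infty$, which by itself does not stop $\{u_p>0\}$ from growing thin filaments or from collapsing as $p\to\infty$, but rather the $p$-independence of the non-degeneracy and linear-growth constants --- exactly the content of Lemma \ref{l6.1} and Corollary \ref{c6.1} --- on which the whole argument hinges. For the interior free boundary the analogous and genuinely separate difficulty is securing the corresponding $p$-uniform non-degeneracy for the obstacle problem.
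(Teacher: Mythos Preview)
Your proof is correct and follows essentially the same approach as the paper: both establish the two one-sided inclusions by contradiction, combining the uniform linear growth and strong non-degeneracy from Corollary~\ref{c6.1} with the uniform convergence from Theorem~\ref{t6.2}. Your treatment is in fact more detailed than the paper's --- which simply says the second inclusion and the interior case are ``proved similarly'' --- and you correctly flag that the interior convergence requires a separate $p$-uniform non-degeneracy/linear-growth estimate for the obstacle problem, an ingredient the paper does not make explicit.
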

\begin{proof}
  The proof is essentially the same as that of Theorem 5 in \cite{RT12}. We bring it here for the reader's convenience. Let $\Gamma_\varepsilon(E)$ be the $\varepsilon$-neighborhood of the set $E\subset\R^n$, i.e.,
  $$
  \Gamma_\varepsilon(E):=\{x\in\R^n\,:\,\dist(x,E)<\varepsilon\},
  \quad \varepsilon>0.
  $$
  We need to show that, for a given $\varepsilon>0$ and for $p$ large enough (depending on $\varepsilon$), one has
  $$
  \partial\{u_p>0\}\subset\Gamma_\varepsilon(\partial\{u_\infty>0\})
  $$
  and
  $$
  \partial\{u_\infty>0\}\subset\Gamma_\varepsilon(\partial\{u_p>0\}).
  $$
  It is enough to check the first inclusion, since the other one is proved similarly. Suppose the inclusion does not hold. It means that there is a point $z$ such that $z\in\partial\{u_p>0\}$ but $z\notin\Gamma_\varepsilon(\partial\{u_\infty>0\})$. The latter means that
  $$
  \dist(z,\partial\{u_\infty>0\})\geq\varepsilon.
  $$
  If $u_\infty(z)>0$ then, using Theorem \ref{t6.2}, one has
  $$
  u_\infty(z)\geq\theta\, \dist(z,\partial\{u_\infty>0\})
  \geq\theta \, \varepsilon.
  $$
  From the uniform convergence, we have $u_p(z)\geq\frac{2}{3}\theta\, \varepsilon$, for $p$ large enough, which contradicts the fact that $z\in\partial\{u_p>0\}$. Hence $u_\infty(z)=0$ and so $u_\infty\equiv0$ in $B_\varepsilon(z)$, which leads to a contradiction since, from Corollary \ref{c6.1}, one has 
  $$
  \sup_{B_{\varepsilon/2}(z)}u_p\geq\theta \, \frac{\varepsilon}{2}>0.
  $$
  
The convergence for the interior free boundaries is proved similarly.
\end{proof}

\bigskip

\noindent{\bf Acknowledgments.} This work was partially supported by FCT grant SFRH/BPD/92717/2013, and by the Centre for Mathematics of the University of Coimbra -- UID/MAT/00324/2013, funded by the Portuguese Government through FCT/MCTES and co-funded by the European Regional Development Fund through the Partnership Agree\-ment PT2020.

\end{document}